\DeclareMathAlphabet{\mathcal}{OMS}{cmsy}{m}{n}
\newtheorem{thm}{Theorem}[section]
\newtheorem{cor}[thm]{Corollary}
\newtheorem{lem}[thm]{Lemma}
\newtheorem{prop}[thm]{Proposition}
\theoremstyle{definition}
\newtheorem{rem}[thm]{Remark}
\newtheorem{ex}[thm]{Example}
\numberwithin{equation}{section}
\renewcommand{\emptyset}{\varnothing}
\newcommand{\R}{\ensuremath{\mathbb R}}    
\newcommand{\C}{\ensuremath{\mathbb C}}    
\newcommand{\N}{\ensuremath{\mathbb N}}    
\newcommand{\gperp}{{[\perp]}}
\newcommand{\product}{[\cdot\,,\cdot]}
\newcommand{\hproduct}{(\cdot\,,\cdot)}
\newcommand{\lk}{\langle}
\newcommand{\rk}{\rangle}
\newcommand{\calH}{\mathcal H}
\newcommand{\calK}{\mathcal K}         
\newcommand{\calL}{\mathcal L}         
\newcommand{\calM}{\mathcal M}
         \newcommand{\frakR}{\mathfrak R}
\newcommand{\calS}{\mathcal S}
\newcommand{\la}{\lambda}
\newcommand{\veps}{\varepsilon}
\newcommand{\bmat}{\begin{pmatrix}}
\newcommand{\emat}{\end{pmatrix}}
\newcommand{\mat}[4]
{
   \begin{pmatrix}
      #1 & #2\\
      #3 & #4
   \end{pmatrix}
}
\renewcommand{\Re}{\operatorname{Re}}
\newcommand{\linspan}{\operatorname{span}}
\renewcommand{\ker}{\operatorname{ker}}
\newcommand{\ran}{\operatorname{ran}}
\newcommand{\dom}{\operatorname{dom}}
\renewcommand{\sp}{\sigma_{+}}
\newcommand{\sm}{\sigma_{-}}
\newcommand{\Lra}{\Longrightarrow}
\newcommand{\upto}{\uparrow}
\newcommand{\downto}{\downarrow}
\newcommand{\ol}{\overline}
\newcommand{\ds}{\dotplus}
\newcommand{\wt}{\widetilde}
\newcommand{\ov}{\overline}
\newcommand{\III}{\interleave} 
\begin{document}
\vspace*{-.3cm}
\begin{center}
\begin{spacing}{1.7}
{\LARGE\bf The numerical range of non-negative operators in Krein spaces}
\end{spacing}

\vspace{.5cm}
\begin{spacing}{1.7}
{\large Friedrich Philipp and Carsten Trunk}
\end{spacing}
\end{center}

\vspace{.1cm}\hrule\vspace*{.4cm}
\noindent{\bf Abstract}

\vspace*{.3cm}\noindent
We define and characterize the Krein space numerical range $W(A)$ and the Krein space co-numeri\-cal range $W_{\rm co}(A)$ of a non-negative operator $A$ in a Krein space. It is shown that the non-zero spectrum of $A$ is contained in the closure of $W(A)\cap W_{\rm co}(A)$.
\vspace*{0.4cm}\hrule
%
%

\vspace*{.5cm}
\section{Introduction}
The classical numerical range of an operator in a Hilbert space has
been studied extensively and there are many results which connect
algebraic and analytic properties  of an operator with the geometric
properties of its numerical range. For an operator $A$ acting in a Krein
space $(\cal K, \product)$ the Krein space numerical range is defined
by
$$
W(A) = \left\{\frac{[Ax,x]}{[x,x]} : x\in\dom A,\,[x,x]\ne 0\right\}.
$$
There is a substantial interest in studying these relations in the
Krein space setting, see,
e.g., \cite{ba1,ba2,be1,be2,be3,be4,wc,GP,Li1,Li2,Li3}. It is well-known \cite{ba1,ba2,GP} that each of the sets
$$
W^\pm(A) = \left\{[Ax,x] : x\in\dom A,\,[x,x]=\pm1\right\}
$$
is convex and, as $W(A) = W^+(A)\cup W^-(A)$,  $W(A)$ decomposes
into at most two convex subsets.
Using the joint numerical range, conditions for
 $W^\pm(A)$ to be contained in a half space or in a line
are given in \cite{Li2}, and in \cite{Li3} it is shown that the Krein space numerical range is pseudo-convex\footnote{A set is called pseudo-convex (\cite{be2,Li3}) if for any pair of distinct points $x,y$ in this set either the closed line segment joining them or the straight line through $x$ and $y$ except the open line segment joining $x$ and $y$ is contained in the set.} for a special class of matrices. 
In  \cite{be1} boundary generating curves, corners and computer
generation of  the Krein space numerical range are investigated, in
\cite{be2,be3} relations between
the sets $W^+(A)$ and  $W^-(A)$ are discussed and in
\cite{be4} the numerical range is
completely characterized in  2-dimensional Krein spaces.
 Moreover, in \cite{Li1} operators with bounded Krein space numerical ranges
are studied.

In the present paper we give a complete
description of $W(A)$ for non-negative operators in Krein spaces. In fact, it turns out
that $W(A)$ always consists of the entire real axis with the possible exception of
a bounded interval $\Delta$ with $0\in\ol\Delta$. The boundary points of this interval can be calculated
in terms of the positive/negative spectrum of  $A$, see Theorem \ref{t:w} below. Hence,
$W(A)$ is a pseudo-convex set. This (partially) extends a result in 
\cite[Proposition 2.3]{be2} where it is shown that $W(A)$ is pseudo-convex if $A$ is a Krein
space normal matrix with simple eigenvalues only such that Re$A$ has only real eigenvalues.

In the recent paper \cite{wc} D.\ Wu and A.\ Chen proved with elementary methods that the
spectrum of a non-negative operator $A$ in a Krein space $(\calK,\product)$ is always
contained in the closure of its Krein space numerical range,
\begin{equation}\label{Rofe}
\sigma(A)\subset\ol{W(A)}.
\end{equation}
In fact, this statement follows almost immediately with the help of the spectral function
$E$ (see \cite{l}) of the operator $A$: If, e.g., $\la\in\sigma(A)\cap (0,\infty)$ and
$\Delta\subset (0,\infty)$ is a compact interval with $\la$ in its interior, then
$A|E(\Delta)\calK$ is selfadjoint in the Hilbert space $(E(\Delta)\calK,\product)$ and
hence $\la\in \ol{W(A|E(\Delta)\calK)}\subset\ol{W(A)}$. A similar argumentation applies
to negative points in the spectrum of $A$. Hence, it remains to consider the point zero
 in the case when it is an isolated point of $\sigma(A)$. Then either $0\in W(A)$ or
$\ker A$ is neutral. If $\ker A$ is neutral, then we find neutral vectors $x_1$ and $x_0$ such that
$[x_0,x_1] = 1$, $Ax_1 = x_0$ and $Ax_0 = 0$ (cf.\ \eqref{e:carsten} below). Setting $x = tx_1+ x_0$ we obtain
$[Ax,x]/[x,x] = t^2/2$ which tends to zero as $t$ tends to zero.\footnote{We would like
 to mention that in \cite{wc} there is a mistake in the proof of $0\in\ol{W(A)}$ if
$0\in\sigma(A)$, cf.\ Remark \ref{HeuteIstSportfest} below.}

The spectral
inclusion \eqref{Rofe}
 is not very useful since the numerical range
$W(A)$ is always neither bounded from above nor from below. If the inner product $\product$
is not definite on $\ker A$, then  $W(A)$ even covers the entire
real line (with the possible exception of zero). The following
simple example illustrates this.
\begin{ex}
In $\calK := \C^2$ denote by $\hproduct$ the standard scalar product and define the
following matrices
$$
J := \mat 0 1 1 0\quad\text{and}\quad A := \mat 0 1 0 0.
$$
Then $\product := (J\cdot,\cdot)$ defines a Krein space inner product on $\calK$ and
$A$ is non-negative in $(\calK,\product)$. Moreover, for $x = (x_1,x_2)^T\in\C^2$ we have
 $[x,x] =  x_2\ol{x_1} + x_1\ol{x_2} =
2\Re(x_2\ol{x_1})$ and $[Ax,x]=(JAx,x)=|x_2|^2$. Hence, $x_2 = 1$ and $x_1 = t\in\R\setminus\{0\}$
give $W(A) = \R\setminus\{0\}$.
\end{ex}

For this reason we define in Section \ref{s:cow} another subset of the real line which
is connected with $A$: The co-numerical range
$$
W_{\rm co}(A) := \left\{\frac{[Ax,Ax]}{[Ax,x]} : x\in\dom A,\,[Ax,x]\ne 0\right\},
$$
and we  show  in Section \ref{s:cow} the following spectral inclusion:
\begin{equation*}
\sigma(A)\,\subset\,\ol{W(A)\cap W_{\rm co}(A)},
\end{equation*}
with one exception in a very special case in which the inclusion only holds for the set $\sigma(A)\setminus\{0\}$, cf.\ Theorem \ref{t:spec_inc} below.

\section{Non-negative operators in Krein spaces}
Throughout this note let $(\calK,\product)$ be a Krein space, i.e.\
a vector space $\calK$ with a Hermitian
non-degenerate sesquilinear form   $\product$ which admits
 a so-called {\it fundamental decomposition}
\begin{equation}\label{e:fd}
\calK = \calK_+\,[\ds]\,\calK_-,
\end{equation}
where $(\calK_\pm,\pm\product)$ are Hilbert spaces. The symbol $[\ds]$ denotes the direct and
$\product$-orthogonal sum, i.e.\ $\calK_+=\calK_-^\gperp$ and
$\calK_-=\calK_+^\gperp$, where $\gperp$ denotes the $\product$-orthogonal
companion. Then
\begin{equation*}
(x,y) := [x_+,y_+] - [x_-,y_-],\quad x=x_++x_-,y=y_++y_-,\;x_\pm,y_\pm\in\calK_\pm,
\end{equation*}
is an inner product and $(\calK,\hproduct)$ is a Hilbert space.
Evidently, there exist infinitely many fundamental decompositions,
and each of them induces a Hilbert space norm as above. Any two such norms are equivalent, see
\cite[Proposition I.1.2]{l}. Therefore, all topological notions are understood with respect to
the topology induced by these norms. 
For a detailed study of Krein spaces and operators therein we refer to  \cite{ai,b,l}.

A vector $x\in\calK$, $x\neq 0$, is called {\it positive} ({\it negative})
if $[x,x]$ is positive (negative, respectively), and {\it neutral} if $[x,x]=0$. A subspace $\calL\subset\calK$
is called positive (negative, neutral) if each $x\in\calL\setminus\{0\}$ is positive
(negative, neutral, respectively). Furthermore, the subspace $\calL$ is called
{\it non-negative} ({\it non-positive}) if each $x\in\calL\setminus\{0\}$ is either
 neutral or positive (negative, respectively). In addition, we say that the subspace
 $\calL$ is {\it definite} if it is positive or negative. A subspace is called
{\it indefinite} if it is not definite. Note that the trivial subspace $\{0\}$ is
positive, negative and neutral and therefore not indefinite.

The ({\it Krein space{\rm )} adjoint} $T^+$ of a densely defined linear operator $T$ in $(\calK,\product)$ has
domain
$$
\dom T^+ := \{y\in\calK : \exists u\in\calK\forall x\in\dom T : [Tx,y] = [x,u]\}
$$
and satisfies
$$
[Tx,y] = [x,T^+y]\quad\text{for all }x\in\dom T,\,y\in\dom T^+.
$$
The operator $T$ is called {\it selfadjoint} if $T = T^+$.
Note that the spectrum of a selfadjoint operator in a Krein space is in general
not a subset of $\R$. However, the {\it non-negative} operators
in $(\calK,\product)$ have only real spectrum  and no residual
spectrum (cf.\ \cite[Corollary IV.6.2]{b}). Here,
a selfadjoint operator $A$ in $(\calK,\product)$ is called {\em non-negative} if
$\rho(A)\neq\emptyset$ and if $[Ax,x]\ge 0$ holds for all $x\in\dom A$.
A non-negative operator is a special type of a definitizable operator,
see \cite{l}.

 Recall that a non-negative operator
$A$ in $(\calK,\product)$ possesses a spectral function $E$ on $\R$ with the possible
singularities $0$ and $\infty$. The spectral projection $E(\Delta)$ is selfadjoint
in $(\calK,\product)$
and is defined for all bounded Borel sets $\Delta\subset\R$ with $0\notin\partial\Delta$
and their complements $\R\setminus\Delta$. We denote the collection of these sets by $\frakR$. 
The point zero is called a {\it critical point} of $A$ if for each
$\veps > 0$ the subspace $E([-\veps,\veps])\calK$ is indefinite. Analogously, the point
$\infty$ is called a critical point of $A$ if for each $C > 0$ the subspace
$E(\R\setminus (-C,C))\calK$ is indefinite.

Below, we will make extensive use of the spectral function. In the following we collect some of its properties (see e.g., \cite{l}). Let $\Delta,\Delta_1,\Delta_2,\ldots\in\frakR$. Then
\begin{enumerate}
\item[{\rm (a)}] $E(\Delta)$ is a bounded selfadjoint projection in $(\calK,\product)$ and commutes with every bounded operator which commutes with the resolvent of $A$;
\item[{\rm (b)}] If the $\Delta_j$, $j\in\N$, are mutually disjoint, and if their union is an element of $\frakR$, then
$$
E\left(\bigcup_{j=1}^\infty\Delta_j\right)x = \sum_{j=1}^\infty E(\Delta_j)x
$$
for every $x\in\calK$;
\item[{\rm (c)}] $E(\Delta_1 \cap \Delta_2)=E(\Delta_1)E(\Delta_2)$;
\item[{\rm (d)}] $\sigma(A|E(\Delta)\calK)\,\subset\,\sigma(A)\cap\ol\Delta$ \;\;\;and\;\; $\sigma(A|(I - E(\Delta))\calH)\,\subset\,\ol{\sigma(A)\setminus\Delta}$\,;
\item[{\rm (e)}] If $\Delta$ is bounded, then $E(\Delta)\calK$ is a subset of $\dom A$ and $A|E(\Delta)\cal K$
 is a bounded operator.
\end{enumerate}

Note that (d) implies that $E(\mathbb R)=I$ and $E(\emptyset)=0$.

A point $\la\in\sigma(A)$ is said to be a spectral point of {\it positive}
({\it negative}) {\it type} of $A$ if there exists an open neighborhood
$\Delta$ of $\la$ such that $(E(\Delta)\calK, \product)$
($(E(\Delta)\calK, -\product)$, respectively)
is a Hilbert space. The set consisting of all spectral points of
positive (negative) type of $A$ is denoted by $\sp(A)$ ($\sm(A)$, respectively).
We have\footnote{We use the notations
$\R^+ := (0,\infty)$
and $\R^- := (-\infty,0)$.}
\begin{equation}\label{HerzHerz}
\R^\pm\cap\sigma(A)\,\subset\,\sigma_\pm(A).
\end{equation}

We mention the following relation which holds for all $x\in\dom A$:
\begin{equation}\label{e:carsten}
[Ax,x] = 0\quad\Lra\quad x\in\ker A.
\end{equation}
Indeed, the application of the Cauchy-Bunyakowski inequality to the semi-definite inner
product $[A\cdot,\cdot]$ gives $\big|[Ax,y]\big|^2\,\le\,[Ax,x][Ay,y]$ for all $x,y\in\dom A$,
and \eqref{e:carsten} follows.

In the next lemma we collect some statements on the spectral properties of
the point zero. These are well-known, cf.\ \cite[Proposition II.2.1 and Section II.5]{l}.

\begin{lem}\label{l:super_basic}
The length of a Jordan chain corresponding to the eigenvalue zero of
 a non-negative operator $A$ is at most $2$
and the corresponding eigenvector of a Jordan chain of length two is neutral. 
If zero is an isolated spectral point of $A$ then it is an eigenvalue. 
Set\footnote{Note that in \cite{l} the closed linear span is used in the
definition of $\calS_0^+$ and $\calS_0^-$.}
\begin{align*}
\calS_0   &:= \bigcap\left\{E(\Delta)\calK : \Delta\in\frakR,\,0\in\Delta\right\},\\
\calS_0^+ &:= \linspan\left\{E(\Delta)\calK :
\Delta\in\frakR,\,\ol\Delta\subset\R^+\right\},\\
\calS_0^- &:= \linspan\left\{E(\Delta)\calK :
\Delta\in\frakR,\,\ol\Delta\subset\R^-\right\}.
\end{align*}
Then $\calS_0^+$ is positive and $\calS_0^-$ is negative. Moreover,
$\calS_0^+$, $\calS_0^-$, $\calS_0$ are $A$-invariant and are contained
in $\dom A$, and $\calS_0$ is the root subspace of $A$ corresponding to zero.
We have
\begin{equation}\label{e:langer}
\calS_0 = (\calS_0^+\,[\ds]\,\calS_0^-)^\gperp.
\end{equation}
\end{lem}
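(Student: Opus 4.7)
The lemma bundles several assertions: a bound on Jordan chains together with neutrality of the eigenvector, eigenvalue-existence at an isolated spectral point, the sign character of $\calS_0^\pm$, $A$-invariance, and the decomposition \eqref{e:langer}. My plan is to dispatch the algebraic parts from \eqref{e:carsten} and the Cauchy-Bunyakowski inequality that accompanies it, and to derive the structural parts from the properties (a)--(e) of the spectral function $E$ together with the type classification \eqref{HerzHerz}.

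For the Jordan chain, if $Ax_1 = x_0 \ne 0$ and $Ax_0 = 0$, then selfadjointness of $A$ gives $[x_0, x_0] = [Ax_1, x_0] = [x_1, Ax_0] = 0$, so the eigenvector is neutral. If a chain $x_0, x_1, x_2$ of length three existed, the Cauchy-Bunyakowski inequality applied to the pair $(x_2, x_0)$ would read $|[Ax_2, x_0]|^2 \le [Ax_2, x_2][Ax_0, x_0] = 0$, yielding $[Ax_1, x_1] = [x_0, x_1] = 0$; by \eqref{e:carsten} this would force $x_1 \in \ker A$, contradicting $Ax_1 = x_0 \ne 0$. To show that an isolated $0 \in \sigma(A)$ is an eigenvalue, I would pick $\delta > 0$ with $[-\delta, \delta] \cap \sigma(A) = \{0\}$ and $\pm\delta \in \rho(A)$, set $\Delta = (-\delta, \delta) \in \frakR$, and conclude from (e) and (d) that $A_0 := A|E(\Delta)\calK$ is a bounded operator on a nonzero subspace with $\sigma(A_0) \subset \{0\}$. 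The chain bound just proved applies verbatim to $A_0$, hence $A_0^2 = 0$, and any nonzero vector in $E(\Delta)\calK$ then supplies an eigenvector.

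For the sign character of $\calS_0^\pm$: if $\Delta \in \frakR$ satisfies $\ol\Delta \subset \R^+$, then (d) together with \eqref{HerzHerz} puts every point of $\sigma(A|E(\Delta)\calK)$ into $\sp(A)$; a compactness argument using the definition of $\sp(A)$ then identifies $(E(\Delta)\calK, \product)$ as a Hilbert space, so $E(\Delta)\calK$ is positive. The family of such $\Delta$ is closed under finite unions (with (c) handling overlaps), so $\calS_0^+$ is a directed union of positive subspaces and is itself positive; the argument for $\calS_0^-$ is analogous. The $A$-invariance of $\calS_0^\pm$ and the inclusion $\calS_0^\pm \subset \dom A$ follow directly from (a) and (e) applied to each summand.

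For \eqref{e:langer} and the identification of $\calS_0$ with $\ker A^2$, I would verify both inclusions in \eqref{e:langer} using properties (b) and (c). The inclusion $\calS_0 \subset (\calS_0^+ [\ds] \calS_0^-)^\gperp$ comes from writing $x = E(\Delta) z$ with $\ol\Delta \subset \R^+$ and noting that any $y \in \calS_0$ satisfies $y = E(\R \setminus \Delta) y$ (since $0 \in \R \setminus \Delta \in \frakR$), so selfadjointness of $E$ and (c) give $[y, x] = [y, E(\R \setminus \Delta) E(\Delta) z] = 0$. For the reverse, if $x \gperp (\calS_0^+ \cup \calS_0^-)$ and $\Delta' \in \frakR$ has $\ol{\Delta'} \subset \R^+$, then $w := E(\Delta') x \in \calS_0^+$ satisfies $[y, w] = 0$ for all $y \in \calS_0^+$; positivity of $\calS_0^+$ forces $w = 0$, and the analogous statement holds for $\R^-$. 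Exhausting the complement of any $\Delta \ni 0$ from $\frakR$ by such pieces and invoking (b) then gives $E(\Delta) x = x$, hence $x \in \calS_0$. That $\calS_0 = \ker A^2$ finally follows from the Jordan chain bound, applied once the restriction $A|\calS_0$ has been identified as having spectrum $\{0\}$, together with a Laurent expansion of the resolvent of $A$ at $0$ acting on generalized eigenvectors. The main obstacle throughout is that $0$ may be a critical point of $A$, so that $E(\{0\})$ need not exist as a bounded projection; every assertion about $\calS_0$ must therefore be routed through the defining intersection and properties (b), (c) rather than through a single spectral projection at $0$.
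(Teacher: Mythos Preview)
The paper does not prove this lemma at all: immediately before the statement it remarks that the assertions ``are well-known'' and refers the reader to \cite[Proposition~II.2.1 and Section~II.5]{l}. So there is nothing in the paper to compare against; you have supplied far more than the authors do. Most of your sketch is sound---the Jordan-chain argument via \eqref{e:carsten}, the positivity of $\calS_0^\pm$ through \eqref{HerzHerz} and a finite cover of $\ol\Delta$, the $A$-invariance, and both inclusions in \eqref{e:langer} all go through as you describe.

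There is, however, one genuine gap. In showing that an isolated $0\in\sigma(A)$ is an eigenvalue you pass from ``Jordan chains at $0$ have length $\le 2$'' and ``$\sigma(A_0)\subset\{0\}$'' directly to ``$A_0^2=0$''. This inference fails in infinite dimensions: the chain bound yields only $\ker A_0^2=\ker A_0^3$, which for a merely quasinilpotent operator does not force nilpotency. What saves the situation is the non-negativity of $A_0$: with $J$ a fundamental symmetry, $B:=JA_0\ge 0$ in the Hilbert space; then $\sigma(B^{1/2}JB^{1/2})\setminus\{0\}=\sigma(JB)\setminus\{0\}=\emptyset$, and since $B^{1/2}JB^{1/2}$ is Hilbert-space selfadjoint it must vanish, whence $A_0^2=JB^{1/2}(B^{1/2}JB^{1/2})B^{1/2}=0$. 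Your identification of $\calS_0$ with the root subspace is likewise underspecified: the ``Laurent expansion'' remark covers only the isolated case, and when $0$ is a (possibly singular) accumulation point of $\sigma(A)$ one still has to extract $A^2x=0$ for $x\in\calS_0$ from the intersection description alone---this is precisely where the citation of \cite{l} is doing real work.
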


\section{The numerical range of a non-negative operator}
The (Krein space) numerical range of a non-negative operator $A$ in a Krein space  $(\calK, \product)$ is defined by
$$
W(A) := \left\{\frac{[Ax,x]}{[x,x]} : x\in\dom A,\,[x,x]\ne 0\right\}.
$$

In order to formulate our results we define the following constants:
\begin{align}
\begin{split}\label{Grins1}
\mu_- &:=
\begin{cases}
\sup(\sigma(A)\cap\R^-) &\text{if }\sigma(A)\cap\R^-\ne\emptyset,\\
-\infty               &\text{otherwise},
\end{cases}\\
\mu_+ &:=
\begin{cases}
\inf(\sigma(A)\cap\R^+) &\text{if }\sigma(A)\cap\R^+\ne\emptyset,\\
+\infty               &\text{otherwise}.
\end{cases}\\
\nu_- &:=
\begin{cases}
\inf(\sigma(A)\cap\R^-) &\text{if }\sigma(A)\cap\R^-\ne\emptyset,\\
0                     &\text{otherwise},
\end{cases}\\
\nu_+ &:=
\begin{cases}
\sup(\sigma(A)\cap\R^+) &\text{if }\sigma(A)\cap\R^+\ne\emptyset,\\
0                     &\text{otherwise}.
\end{cases}
\end{split}
\end{align}

\begin{lem}\label{l:good_point}
Let $A$ be a non-negative operator in a Krein space  $(\calK, \product)$. Then the following statements hold.
\begin{enumerate}
\item[{\rm (i)}]  If $\mu_+ > 0$ and $\ker A$ is negative, then $0\in\sm(A)\cup\rho(A)$,
and for each $t\in [0,\mu_+]$ the operator $A - t$ is non-negative.
\item[{\rm (ii)}] If $\mu_- < 0$ and $\ker A$ is positive, then $0\in\sp(A)\cup\rho(A)$,
and for each $t\in [\mu_-,0]$ the operator $A - t$ is non-negative.
\end{enumerate}
\end{lem}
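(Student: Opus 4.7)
The two statements are dual: (ii) follows from (i) applied to $-A$ viewed as a non-negative operator in the Krein space $(\calK,-\product)$. So I focus on (i) and assume $\mu_+>0$ and $\ker A$ is negative.

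The plan is to use the spectral function $E$ of $A$ to split $\calK$ into three $\product$-orthogonal pieces carrying, respectively, the positive spectrum, the strictly negative spectrum, and the spectral content near $0$, and then estimate $[(A-t)x,x]$ piece by piece. Fix $\veps\in(0,\mu_+)$ and $R>\mu_+$. The three bounded Borel sets $[-R,-\veps]$, $(-\veps,\veps)$, $[\veps,R]$ are pairwise disjoint, lie in $\frakR$ (none has $0$ on its boundary), and union to $[-R,R]\in\frakR$. Properties (b) and (c) yield
\[
E([-R,R]) = E^-_R + E^0 + E^+_R,
\]
a sum of three mutually $\product$-orthogonal projections whose ranges are $A$-invariant and carry spectra in $[-R,-\veps]$, $[-\veps,0]$, $[\mu_+,R]$ respectively, by property (d) and $\sigma(A)\cap(0,\mu_+)=\emptyset$. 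Standard spectral theory of non-negative operators identifies $E^+_R\calK$ as uniformly positive with $A|_{E^+_R\calK}\ge\mu_+$ (as a selfadjoint operator on the Hilbert space $(E^+_R\calK,\product)$), and $E^-_R\calK$ as uniformly negative. The crux is the claim that $E^0\calK$ is non-positive.

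To prove the claim I iterate the decomposition of $E^0$ via (b): for each $\veps'\in(0,\veps)$, $E^0 = E((-\veps,-\veps')) + E([-\veps',\veps'])$, since the summand $E((\veps',\veps))$ vanishes (its range would carry a bounded operator with empty spectrum, hence is trivial). As $\veps'\downto 0$, $E((-\veps,-\veps'))$ converges strongly to the projection onto a subspace $\calN_\veps\subset\calS_0^-$, which is negative by Lemma \ref{l:super_basic}, while $E([-\veps',\veps'])$ converges strongly to the projection onto $\calS_0$. Because $\ker A$ is negative it contains no neutral vectors, so Lemma \ref{l:super_basic} rules out Jordan chains at $0$ of length two and forces $\calS_0=\ker A$, which is negative. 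By \eqref{e:langer}, $\calS_0$ is $\product$-orthogonal to $\calS_0^-\supset\calN_\veps$, so $E^0\calK=\calN_\veps [\ds] \calS_0$ is the $\product$-orthogonal sum of two negative subspaces and is therefore non-positive.

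Granted the claim, for any $t\in[0,\mu_+]$ and $x\in\dom A$ set $x_R := E([-R,R])x = x^- + x^0 + x^+$; by $\product$-orthogonality,
\[
[(A-t)x_R, x_R] = \sum_{i\in\{-,0,+\}} \bigl([Ax^i, x^i] - t[x^i, x^i]\bigr),
\]
and each summand is $\ge 0$: for $i\in\{-,0\}$ because $[Ax^i,x^i]\ge 0$ and $-t[x^i,x^i]\ge 0$ (the subspace being non-positive and $t\ge 0$); for $i=+$ because $[Ax^+,x^+]\ge\mu_+[x^+,x^+]\ge t[x^+,x^+]$. Letting $R\to\infty$, $x_R\to x$ and $Ax_R = E([-R,R])Ax\to Ax$, and passing to the limit gives $[(A-t)x,x]\ge 0$. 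Combined with $\rho(A-t) = \rho(A)+t\ne\emptyset$, this shows $A-t$ is non-negative. For the first conclusion: if $0\in\sigma(A)$, upgrading non-positivity of $E^0\calK$ to uniform negativity (via a more careful analysis of the strong limits above, combined with the closedness of $\calS_0=\ker A$ and the fact that $\calN_\veps$ is uniformly negative on spectrally truncated subspaces) identifies $(E^0\calK,-\product)$ as a Hilbert space, giving $0\in\sm(A)$; otherwise $0\in\rho(A)$. The principal obstacle I anticipate is precisely this upgrade from non-positivity to uniform negativity for $E^0\calK$, which requires delicate treatment if negative spectrum of $A$ accumulates at $0$.
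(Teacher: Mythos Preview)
Your argument contains a circularity at its core. The assertion that $E([-\veps',\veps'])$ converges strongly as $\veps'\downto 0$ (to a projection onto $\calS_0$) is precisely the statement that $0$ is not a singular critical point of $A$---but this is essentially what (i) asks you to prove. The spectral projections of a non-negative operator are selfadjoint only in the Krein sense, not in the Hilbert sense, so monotonicity of ranges does not yield strong convergence; when $0$ is singular the norms $\|E([-\veps',\veps'])\|$ blow up. The same issue afflicts your companion claim that $E((-\veps,-\veps'))$ converges. A second, independent gap is the limit $x_R = E([-R,R])x\to x$ as $R\to\infty$: this presupposes that $\infty$ is not a singular critical point of $A$, which is not among the hypotheses.

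The paper avoids both pitfalls. For $0\in\sm(A)\cup\rho(A)$ it first localizes to the bounded operator $A|E((-\mu,\mu))\calK$ (so $\calS_0^+=\{0\}$ there), observes as you do that $\calS_0=\ker A$, and then---instead of any limiting procedure---extends $\calS_0^-$ to a maximal non-positive subspace $L_-$; by \eqref{e:langer} and Bogn\'ar's duality, $L_-^\gperp$ is maximal non-negative and contained in $\ker A$, so negativity of $\ker A$ forces $L_-^\gperp=\{0\}$ and hence the local space is an anti-Hilbert space. For the non-negativity of $A-t$, the paper establishes $[(A-t)x,x]\ge 0$ on the dense set $\ker A\,[\ds]\,\calS_0^-\,[\ds]\,\calS_0^+$ much as you do, but then, rather than passing to a limit in $x$, notes that $A-t$ restricted to $E(\Delta)\calK$ is bijective for compact $\Delta\subset\R\setminus(0,\mu_+)$; thus $(A-t)$ maps the dense set onto a dense set, and one obtains $[(A-t)^{-1}y,y]\ge 0$ for $y$ in a dense set. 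Since $(A-t)^{-1}$ is bounded, this extends to all $y\in\calK$, which is equivalent to $A-t$ being non-negative. This $(A-t)^{-1}$ trick is what sidesteps the possible singularity at $\infty$.
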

\begin{proof}
We will only prove (i). The proof of (ii) is similar. Let $\mu\in (0,\mu_+)$ be arbitrary.
As $0\in\sm(A)\cup\rho(A)$ if and only if $0\in\sm(B)\cup\rho(B)$,
$B := A|E((-\mu,\mu))\calK$, for the first assertion we may assume that $A$ is bounded and
 $\sigma(A)\cap\R^+ = \emptyset$. Therefore, $\calS_0^+ = \{0\}$. It follows from Lemma \ref{l:super_basic} that $\calS_0 = \ker A$. Due to
  \cite[Proposition I.1.1]{l}  $\calS_0^-$ is contained in a
  maximal non-positive
subspace $L_-$. By \cite[Theorem V.4.4]{b}
$L_-^\gperp$ is maximal non-negative and
$L_-^\gperp\subset \calS_0^{-\gperp} = \ker A$, see \eqref{e:langer}. 
Since $\ker A$ is negative, this implies $L_-^\gperp = \{0\}$ and  therefore $\calK_+ = \{0\}$ in \eqref{e:fd}.
Hence, $0\in\sm(A)\cup\rho(A)$.

In view of \eqref{e:langer} and \cite[Theorem I.5.2]{l}, we obtain the following decomposition
\begin{equation}\label{e:lem_dec}
\calK = \ker A \,[\ds]\,\ol{\calS_0^-\,[\ds]\,\calS_0^+}.
\end{equation}
It remains to prove that $A - t$ is non-negative for all $t\in [0,\mu_+]$. For this,
it suffices to consider only $t\in (0,\mu_+)$. For $x\in\ker A$ or $x\in\calS_0^-$ we have $[x,x]\leq 0$ and thus
$$
[(A - t)x,x] = [Ax,x] - t[x,x]\ge 0.
$$
If $x\in\calS_0^+$, then there exists a compact interval $\Delta\subset [\mu_+,\infty)$
such that $x\in E(\Delta)\calK$. Therefore,
$A|E(\Delta)\calK$ is a selfadjoint operator in the Hilbert space
$(E(\Delta)\calK,\product)$ and
$$
[(A - t)x,x] = [(A|E(\Delta)\calK)x,x] - t[x,x]\ge (\mu_+ - t)[x,x]\ge 0.
$$
Summing up, $[(A - t)x,x]\ge 0$ is valid for all $x\in\ker A \,[\ds]\,\calS_0^-\,[\ds]\,\calS_0^+$.

Moreover, for each compact interval $\Delta\subset \mathbb R \setminus (0,\mu_+)$ we
have $E(\Delta)\calK \subset \dom A$ and $(A-t)|E(\Delta)
\calK : E(\Delta)
\calK \to E(\Delta)
\calK$ is a bijective and boundedly invertible operator. Thus,
$$
 \ker A \,[\ds]\,\calS_0^-\,[\ds]\,\calS_0^+ \subset \left\{
 (A-t)x | x\in
  \ker A \,[\ds]\,\calS_0^-\,[\ds]\,\calS_0^+  \right\}
$$
and both sets are dense in $\calK$, see \eqref{e:lem_dec}. If for $x\in\ker A \,[\ds]\,\calS_0^-\,[\ds]\,\calS_0^+$ we set $y:= (A-t)x$, we obtain $[(A - t)^{-1}y,y] = [(A - t)x,x]\ge 0$ for all $y$ in a dense subset of $\calK$. But this is equivalent to the fact that $A-t$ is non-negative.
\end{proof}

\begin{rem}\label{r:infty}
It follows in particular from Lemma \ref{l:good_point} that the point $0$ is not a
critical point of $A$ if $0\notin\sigma_p(A)$ and if either $\mu_+ > 0$ or $\mu_- < 0$.
Analogously, if $\nu_+ < \infty$ or $\nu_- > -\infty$, then the point $\infty$ is not a
critical point of $A$.
\end{rem}

The following theorem is the main result in this section.
It characterizes the numerical range of $A$.

\begin{thm}\label{t:w}
Let $A\neq 0$ be a non-negative operator in the Krein space $(\calK,\product)$ and let $(\calK, \product)$ be indefinite.
Then the following statements hold.
\begin{enumerate}
\item[{\rm (i)}]   If $\ker A = \{0\}$, then $\mu_->-\infty$,
$\mu_+<\infty$ and
$$
W(A)\cup\{\mu_-,\mu_+\} = (-\infty,\mu_-]\cup [\mu_+,\infty).
$$
\item[{\rm (ii)}]  If $\ker A$ is indefinite, then
$$
W(A)\cup\{0\} = \R.
$$
\item[{\rm (iii)}] If $\ker A\neq\{0\}$ is positive, then $\mu_- > -\infty$ and
$$
W(A)\cup\{\mu_-\} = (-\infty,\mu_-]\cup [0,\infty).
$$
\item[{\rm (iv)}]  If $\ker A\neq\{0\}$ is negative, then $\mu_+ < \infty$ and
$$
W(A)\cup\{\mu_+\} = (-\infty,0]\cup [\mu_+,\infty).
$$
\end{enumerate}
Moreover, the following holds for the points $\mu_-$, $\mu_+$ and $0$:
\begin{enumerate}
\item[{\rm (a)}] $0\in W(A)$ if and only if $0\in\sigma_p(A)$ and $\ker A$ is not neutral.
\item[{\rm (b)}] $\mu_-\in W(A)$ in cases {\rm (i)} and {\rm (iii)} if and only if
$\mu_-\in\sigma_p(A)$.
\item[{\rm (c)}] $\mu_+\in W(A)$ in cases {\rm (i)} and {\rm (iv)} if and only if
$\mu_+\in\sigma_p(A)$.
\end{enumerate}
\end{thm}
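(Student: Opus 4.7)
The plan is to prove the four cases by combining three ingredients: upper bounds on $W^\pm(A)$ coming from Lemma \ref{l:good_point}, lower bounds obtained by explicit vector constructions using the spectral function $E$, and convexity of $W^\pm(A)$ to close the resulting intervals. The boundary statements (a)--(c) are handled uniformly via the implication \eqref{e:carsten}.

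First I would dispose of the finiteness claims. In case (i), if $\sigma(A)\cap\R^+=\emptyset$ then $\calS_0^+=\{0\}$, and the assumption $\ker A=\{0\}$ forces the root subspace $\calS_0$ to be trivial as well; by \eqref{e:langer} an argument exactly as in the proof of Lemma \ref{l:good_point}(i) then shows that $(\calK,-\product)$ is a Hilbert space, contradicting indefiniteness, so $\mu_+<\infty$. The remaining finiteness statements in (i), (iii), (iv) follow symmetrically. Statements (a)--(c) are then immediate: if $\lambda_0\in\{0,\mu_-,\mu_+\}$ arises as a ratio $[Ax,x]/[x,x]$, then $[(A-\lambda_0)x,x]=0$ with $A-\lambda_0$ non-negative (trivially for $\lambda_0=0$, by Lemma \ref{l:good_point} for $\lambda_0=\mu_\pm$), so \eqref{e:carsten} yields $x\in\ker(A-\lambda_0)$; conversely, any eigenvector at $\mu_\pm$ lies in a definite spectral subspace by \eqref{HerzHerz} and realises the ratio $\mu_\pm$, while for $\lambda_0=0$ one uses a non-neutral vector in $\ker A$.

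For the main inclusions, the upper bounds $W^+(A)\subset[\mu_+,\infty)$ in (i), (iv), $W^-(A)\subset(-\infty,\mu_-]$ in (i), (iii), $W^+(A)\subset[0,\infty)$ in (iii), and $W^-(A)\subset(-\infty,0]$ in (iv) all follow from $[(A-t)x,x]\geq 0$ combined with the sign of $[x,x]$. The reverse containment is the core of the proof: for a target value I would pick $x_\pm\in E(\Delta_\pm)\calK$ with $\Delta_\pm$ compact intervals abutting $\mu_\pm$, so that $E(\Delta_+)\calK$ and $E(\Delta_-)\calK$ are $\product$-orthogonal and $A$ is ordinary self-adjoint on each (in $\product$ on the first, in $-\product$ on the second). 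Setting $x=sx_++tx_-$ gives
\[
\frac{[Ax,x]}{[x,x]}\,=\,\frac{\alpha+u\beta}{1-u},\qquad u=(t/s)^2,
\]
where $\alpha=[Ax_+,x_+]/[x_+,x_+]$ and $\beta=[Ax_-,x_-]/(-[x_-,x_-])$ lie near $\mu_+$ and $|\mu_-|$ respectively; as $u$ runs through $[0,1)\cup(1,\infty)$ this M\"obius-type expression sweeps the half-lines $(\mu_+,\infty)$ and $(-\infty,\mu_-)$, and convexity of $W^\pm(A)$ together with (a)--(c) closes them to the sets claimed in (i), (iii), (iv).

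Case (ii) requires a separate construction. Since $\ker A$ is indefinite it contains either vectors of opposite signature or a non-trivial neutral vector; in either situation one can choose $x_0\in\ker A$ and $y\in\dom A\setminus\ker A$ with $[x_0,y]\neq 0$. Then $x=tx_0+y$ satisfies $[Ax,x]=[Ay,y]\neq 0$ constantly in $t$ while $[x,x]=[y,y]+2t\,\Re[x_0,y]+t^2[x_0,x_0]$ sweeps out all of $\R$, so the ratio exhausts $\R\setminus\{0\}$. The main obstacle is to verify in (i), (iii), (iv) that the M\"obius construction really covers the full half-lines when $\nu_\pm$ is finite---so that purely one-sided spectral subspaces give only bounded Hilbert numerical ranges and the cross-term $u\to 1$ is essential---and that the sharp endpoint behavior distinguishing $\mu_\pm\in W(A)$ from $\mu_\pm\in\ol{W(A)}\setminus W(A)$ is correctly captured by (b) and (c).
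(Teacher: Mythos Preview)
Your outline is sound and would yield a correct proof, but it is organized differently from the paper's and is, in one respect, tidier.

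\textbf{Upper bound.} Your argument ``$A-\mu_\pm$ is non-negative by Lemma~\ref{l:good_point}, hence $[(A-\mu_\pm)x,x]\ge 0$ forces $[Ax,x]/[x,x]\notin(\mu_-,\mu_+)$'' is more direct than what the paper does. In case~(i) the paper instead verifies the inequality first for $x\in\calS_0^+[\ds]\calS_0^-$ by a hands-on estimate with a suitable fundamental decomposition, then pushes it to all of $\dom A$ via density and continuity of $A^{-1}$; in case~(iii) it shifts by $\veps=-\mu_-/2$ to reduce to~(i). Your route avoids both the density/limit step and the shift trick, at the cost of invoking Lemma~\ref{l:good_point} (which itself already contains the non-negativity of $A-t$). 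The paper's version, on the other hand, makes the role of the spectral subspaces $\calS_0^\pm$ more transparent.

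\textbf{Lower bound.} Your M\"obius construction with $x=sx_++tx_-$ is exactly the paper's construction with $x_n=tx_n^++x_n^-$; the paper uses approximate eigensequences rather than fixed vectors, but the computation is the same. The paper does \emph{not} appeal to convexity of $W^\pm(A)$; in case~(iii) it fills the segment $(0,\mu_+]$ by the explicit one-parameter family $x=tu_++x_+$ with $u_+\in\ker A$. Your convexity shortcut is legitimate (the convexity of $W^\pm(A)$ is quoted in the introduction) and arguably cleaner.

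\textbf{One gap to patch.} Your M\"obius step assumes you can pick $x_+\in E(\Delta_+)\calK$ with $\ol{\Delta_+}\subset\R^+$. In case~(iii) it may happen that $\sigma(A)\cap\R^+=\emptyset$ (so $\mu_+=\infty$), and symmetrically in case~(iv); then there is no such $x_+$, and nothing in your write-up produces the unbounded half-line in $W^+(A)$. The fix is immediate: take $x_+\in\ker A$ with $[x_+,x_+]=1$ instead (so $\alpha=0$ in your formula), which makes $(\alpha+u\beta)/(1-u)=u\beta/(1-u)$ sweep $[0,\infty)$ for $u\in[0,1)$. The paper handles this the same way (it takes $\la_-=0$ and $x_-\in\ker A$ when $\mu_-=-\infty$), so you should state this branch explicitly. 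Your closing ``main obstacle'' paragraph worries about $\nu_\pm$ finite, which the $u\to 1$ limit already handles; the genuine edge case is the one above.
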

\begin{proof}
We begin with the proof of (ii). If $\ker A$ is indefinite, then there exists a neutral
element $x_0\in\ker A$, $x_0\neq 0$. Moreover, as $A\neq 0$, we have $\ker A\neq\calK$,
and hence the interior of $\ker A$ is empty so that $\calK\setminus\ker A$ is a dense set
in $\calK$. Therefore, there exists $y\in\calK\setminus\ker A$ such that $[x_0,y]\neq 0$.
We may assume $y\in\dom A$ and $[x_0,y] = 1$. Set $u_0 := y - ([y,y]/2)x_0\in\dom A$.
Then $[x_0,u_0] = 1$, $[u_0,u_0] = 0$ and $u_0\notin\ker A$, hence $[Au_0,u_0]\ne 0$,
see \eqref{e:carsten}. Moreover, for all $t\in\R\setminus\{0\}$ we have
$$
\frac{[A(tx_0+u_0),tx_0+u_0]}{[tx_0+u_0,tx_0+u_0]} = \frac{[Au_0,u_0]}{2t},
$$
which shows that $\R\setminus\{0\}\subset W(A)$ or, equivalently, $W(A)\cup\{0\} = \R$.

In what follows we assume that $\ker A$ is definite. Then Lemma
\ref{l:super_basic} implies $\calS_0 = \ker A$. Note
that in (i), (iii) and (iv) $\mu_+ = \infty$ and $\mu_- = -\infty$ is not possible, since this would imply
 $A = 0$. Without loss of generality we assume $\mu_+ < \infty$. If $\mu_- = -\infty$,
then $\ker A$ must be negative and $\neq\{0\}$ since otherwise $0\in\sp(A)$ by Lemma
\ref{l:good_point}(ii) and hence $\calK_- = \{0\}$ in \eqref{e:fd} which we had excluded.
Therefore, there exist $\la_-\in\sigma(A)\cap(-\infty,0]$ and $\la_+\in\sigma(A)\cap\R^+$.
 Let $\Delta_+$ be an open interval with $\la_+\in\Delta_+$ and $\ol{\Delta_+}\subset\R^+$
 and set $\calH_+ := E(\Delta_+)\calK$. If $\la_- < 0$ choose an open interval $\Delta_-$
with $\la_-\in\Delta_-$ and $\ol{\Delta_-}\subset\R^-$ and set
$\calH_- := E(\Delta_-)\calK$.
If $\la_- = 0$ choose a negative vector $x_-\in\ker A\setminus\{0\}$ and set
$\calH_- := \linspan\{x_-\}$. As $(\calH_\pm,\pm\product)$ are mutually orthogonal Hilbert
 spaces, we may choose a fundamental decomposition
\begin{equation}\label{e:fd2}
\calK = \calK_+'[\ds]\calK_-'
\end{equation}
such that $\calH_\pm\subset\calK_\pm'$, cf.\ \cite[Theorem V.3.5]{b}.
 By $\|\cdot\|$ denote the Hilbert space norm
 arising from \eqref{e:fd2}. Now let $(x_n^\pm)$ be sequences in $\calH_\pm$ with
 $\|x_n^\pm\| =\pm [x_n^\pm, x_n^\pm]= 1$, $x_n^\pm\in \dom A$
 for each $n\in\N$ and $(A - \la_\pm)x_n^\pm\to 0$ ($n\to\infty$)
 and define
$$
x_n := t x_n^+ + x_n^-
$$
with some $t\in\R^+$, $t\ne 1$. Then
$$
[x_n,x_n] = t^2[x_n^+,x_n^+] + [x_n^-,x_n^-] = t^2 - 1
$$
and
$$
[Ax_n,x_n] = t^2[Ax_n^+,x_n^+] + [Ax_n^-,x_n^-] = t^2\la_+ + |\la_-| + t^2\veps_n^+
 + \veps_n^-,
$$
where
$$
\veps_n^\pm = [(A - \la_\pm)x_n^\pm,x_n^\pm],\quad n\in\N.
$$
Since $\veps_n^\pm\to 0$ ($n\to\infty$), there exists $N\in\N$ such that $\la_+ +
 |\la_-| + \veps_n^+ + \veps_n^- > 0$ for $n\ge N$.
Therefore, it follows that for $n\ge N$ we have
$$
\frac{[Ax_n,x_n]}{[x_n,x_n]} = \frac{t^2\la_+ + |\la_-| + t^2\veps_n^+ +
 \veps_n^-}{t^2 - 1}\to
\begin{cases}
\la_- - \veps_n^- &\text{as }t\downto 0,\\
-\infty           &\text{as }t\upto 1,\\
+\infty           &\text{as }t\downto 1,\\
\la_+ + \veps_n^+ &\text{as }t\upto\infty.
\end{cases}
$$
This proves $(-\infty,\la_-)\cup(\la_+,\infty)\subset W(A)$ and thus
\begin{equation}\label{e:inclusion}
W(A)\,\supset\,
\begin{cases}
(-\infty,\mu_-)\cup(\mu_+,\infty) &\text{ if }\mu_-\neq -\infty,\,\mu_+\neq\infty\\
(-\infty,0)\cup(\mu_+,\infty)     &\text{ if }\mu_- = -\infty,\,\mu_+\neq\infty\\
(-\infty,\mu_-)\cup(0,\infty)     &\text{ if }\mu_-\neq -\infty,\,\mu_+ = \infty\,.
\end{cases}
\end{equation}
%
We will now prove (i).
By Lemma \ref{l:good_point} and the assumption that $\calK$ is indefinite we have
$\mu_->-\infty$ and $\mu_+<\infty$.
Due to \eqref{e:inclusion}, (i) clearly holds if $\mu_+ = \mu_- = 0$. Without loss of generality
we assume $\mu_- < 0$. By assumption, $0\notin\sigma_p(A)$, which implies that zero cannot be an
isolated spectral point of $A$, cf. Lemma \ref{l:super_basic}. Hence, $\mu_+ = 0$ or $0\in\rho(A)$. In the case $\mu_+ = 0$
we have $\ker A=\{0\}$ which is, by definition, simultaneously a
positive, negative and neutral subspace, and
for small $\veps > 0$
  the operator $A + \veps$ is non-negative (Lemma \ref{l:good_point}(ii)) with
$0\in\rho(A+\veps)$.
Since $W(A+\veps) = \{t+\veps : t\in W(A)\}$, it is no restriction to assume
$0\in\rho(A)$ and thus $\mu_+ > 0$.

Let $x_\pm\in\calS_0^\pm$, set $x := x_+ + x_-$ and assume $[x,x]\ne 0$. Then
there exists $t > 0$ such that $x_+\in \calM_+ := E([\mu_+,t])\calK$ and
$x_-\in \calM_- := E([-t,\mu_-])\calK$. Choose a fundamental decomposition
$\calK = \calK_+[\ds]\calK_-$ such that $\calM_\pm\subset\calK_\pm$
(cf.\ \cite[Theorem V.3.5]{b}) and denote the
 corresponding Hilbert space scalar product and norm by $\hproduct$ and $\|\cdot\|$, respectively.
If $\|x_+\| > \|x_-\|$, then
$$
\frac{[Ax,x]}{[x,x]} = \frac{[Ax_+,x_+] + [Ax_-,x_-]}{\|x_+\|^2 -
 \|x_-\|^2}\,\ge\,\frac{(Ax_+,x_+)}{\|x_+\|^2}\,\ge\,\mu_+.
$$
And if $\|x_+\| < \|x_-\|$, then
$$
\frac{[Ax,x]}{[x,x]} = -\frac{[Ax_+,x_+] + [Ax_-,x_-]}{\|x_-\|^2 -
 \|x_+\|^2}\,\le\,\frac{(Ax_-,x_-)}{\|x_-\|^2}\,\le\,\mu_-.
$$
This implies that $\frac{[Ax,x]}{[x,x]}\in (-\infty,\mu_-]\cup[\mu_+,\infty)$ for all
 $x\in\calS_0^+[\ds]\calS_0^-$ with $[x,x]\ne 0$.

Now, let $x\in\dom A$ such that $[x,x]\neq 0$ and set $y := Ax$. By \eqref{e:langer}
 the subspace $\calS_0^+[\ds]\calS_0^-$ is dense in $\calK$. Hence, there exists a
 sequence $(y_n)$ in $\calS_0^+[\ds]\calS_0^-$ such that $y_n\to y$ ($n\to\infty$).
 Since $A^{-1}\calS_0^\pm\subset\calS_0^\pm$ also the vectors $x_n := A^{-1}y_n$ are
elements of $\calS_0^+[\ds]\calS_0^-$, and $[x_n,x_n]\neq 0$ holds for all $n\ge N$
with some $N\in\N$. Therefore we obtain
$$
\frac{[Ax,x]}{[x,x]} = \frac{[y,A^{-1}y]}{[A^{-1}y,A^{-1}y]} =
\lim_{n\to\infty}\frac{[y_n,A^{-1}y_n]}{[A^{-1}y_n,A^{-1}y_n]} =
 \lim_{n\to\infty}\frac{[Ax_n,x_n]}{[x_n,x_n]}
$$
and thus $\frac{[Ax,x]}{[x,x]}\in (-\infty,\mu_-]\cup[\mu_+,\infty)$. Statement (i) is proved.

Now, assume that $\ker A\neq\{0\}$ is positive. Again, if $\mu_+ = \mu_- = 0$, the assertion follows from \eqref{e:inclusion}. Recall that $\mu_- = -\infty$ and Lemma \ref{l:good_point}(ii) imply $0\in\sigma_+(A)$ and $\calK_- = \{0\}$ in \eqref{e:fd} which contradicts the assumption that $\calK$ is indefinite. Hence $\mu_-\in (-\infty,0]$.

Assume $\mu_- < 0$. Then, by Lemma \ref{l:good_point}(ii), we have $0\in\sp(A)$ and $A + \veps$, $\veps = -\mu_-/2$, is non-negative. Hence, by (i) we have $W(A + \veps)\cup\{-\veps,\veps\} = (-\infty,-\veps]\cup [\veps,\infty)$. Moreover, $\veps$ is an eigenvalue of $A + \veps$ and so $W(A + \veps)\cup\{-\veps\} = (-\infty,-\veps]\cup [\veps,\infty)$ which implies
\begin{align*}
W(A)\cup\{\mu_-\}
&= \{t - \veps : t\in W(A + \veps)\}\cup\{\mu_-\}\\
&= \{t - \veps : t\in W(A + \veps)\cup\{-\veps\}\}\\
&= \{t - \veps : t\in (-\infty,-\veps]\cup [\veps,\infty)\}\\
&= (-\infty,\mu_-]\cup [0,\infty).
\end{align*}
It remains to consider the case $\mu_- = 0$ and $\mu_+ > 0$. According to
 \eqref{e:inclusion} nothing is to show if $\mu_+ = \infty$. Thus, assume
 $\mu_+\in (0,\infty)$. We have to show that $(0,\mu_+]\subset W(A)$. To
this end choose some compact interval $\Delta\subset\R^+$ such that
$E(\Delta)\calK\neq\{0\}$ and choose $x_+\in E(\Delta)\calK$, $x_+\neq 0$,
with $[x_+,x_+] = 1$. Then $[Ax_+,x_+]\ge\mu_+ > 0$. Moreover, choose $u_+\in\ker A$
with $[u_+,u_+] = 1$. Then with $x := tu_+ + x_+$, $t\ge 0$, we have
$$
\frac{[Ax,x]}{[x,x]} = \frac{[Ax_+,x_+]}{t^2 + 1},
$$
which shows that $(0,[Ax_+,x_+]]\subset W(A)$ and thus also $(0,\mu_+]\subset W(A)$ and (iii) is proved. Statement
(iv) follows with a similar reasoning.

In order to see that (a) holds we observe that $0\in W(A)$ if and only if there exists
$x\in\dom A$ such that $[Ax,x] = 0$ and $[x,x]\neq 0$. By \eqref{e:carsten} this holds
if and only if there exists $x\in\ker A$ with $[x,x]\neq 0$ and
 (a) is shown. If $\mu_+ = 0$, then (c) follows from (a).
Let $0 < \mu_+ < \infty$ in cases (i) or (iv). Then by Lemma \ref{l:good_point}(i) the
operator $A - \mu_+$ is non-negative and $\mu_+\in\sigma_+(A)$. Hence, we have $\mu_+\in W(A)$ 
if and only if $0\in W(A - \mu_+)$, which, by (a), is equivalent to $\ker(A - \mu_+)\neq\{0\}$.
The statement (b) is proved similarly.
\end{proof}

\begin{rem}\label{HeuteIstSportfest}
 In the proof of Theorem 3.1 in \cite{wc} there is a mistake.
Contrary to the claim in  \cite{wc},
the sequence $[AJx_n,Jx_n]$ in the proof of Theorem 3.1
in \cite{wc}, does, in general,
not converge to zero. However, the statement of Theorem 3.1 in \cite{wc}
is correct, see also Corollary \ref{c:easy} below.
\end{rem}

\begin{cor}\label{c:easy}
If $\calK$ is indefinite and $A\neq 0$, then
\begin{enumerate}
\item[{\rm (i)}]   $\sigma(A)\subset\ol{W(A)}$.
\item[{\rm (ii)}]  The sets $W(A)\cap\R^+$ and $W(A)\cap\R^-$ are convex and unbounded.
\item[{\rm (iii)}] If $\mu_+ = \mu_- = 0$ then $\ol{W(A)} = \R$.
\end{enumerate}
\end{cor}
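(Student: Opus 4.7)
The plan is to deduce all three parts directly from Theorem~\ref{t:w}, since that theorem already gives an explicit description of $W(A)$ in each of its four cases. No new computation should be needed beyond bookkeeping: the spectrum is real because $A$ is non-negative, and the definitions in \eqref{Grins1} trivially yield $\sigma(A)\cap\R^-\subset(-\infty,\mu_-]$ and $\sigma(A)\cap\R^+\subset[\mu_+,\infty)$.

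For (i), I would check case-by-case that $\ol{W(A)}$ contains $\sigma(A)\setminus\{0\}$. In cases (i), (iii), (iv) of Theorem~\ref{t:w} this is immediate from the displayed equality for $W(A)\cup\{\mu_\pm\}$, whose right-hand side already contains $(-\infty,\mu_-]\cup[\mu_+,\infty)$, and in case (ii) one already has $\ol{W(A)}=\R$. The only point that needs care is $0\in\sigma(A)$: if $\ker A\neq\{0\}$ is definite, part (a) of Theorem~\ref{t:w} places $0$ in $W(A)$; if $\ker A=\{0\}$, Lemma~\ref{l:super_basic} forces $0$ not to be isolated in $\sigma(A)$, so $0$ is a limit of points of $\sigma(A)\setminus\{0\}\subset\ol{W(A)}$; the indefinite-kernel case is covered directly by (ii).

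For (ii), I would again simply read off $W(A)\cap\R^\pm$ from each of the four cases of Theorem~\ref{t:w}: in every case the intersection is a half-line of the form $(\mu_+,\infty)$, $[\mu_+,\infty)$ or $\R^+$ (and symmetrically on the negative side), all of which are convex and unbounded.

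For (iii), under the hypothesis $\mu_-=\mu_+=0$ the right-hand side in each of the four cases of Theorem~\ref{t:w} collapses to $\R$ upon adjoining the point $0$, so $W(A)\cup\{0\}=\R$, hence $\ol{W(A)}=\R$, in every situation. The main obstacle, if any, is simply to keep the four-case analysis tidy; the only genuinely subtle step is the handling of $0\in\sigma(A)$ in (i) when $\ker A=\{0\}$ and $0$ lies in the continuous spectrum, where I would invoke Lemma~\ref{l:super_basic} as above.
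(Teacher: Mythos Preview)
Your proposal is correct and matches the paper's intended approach: the corollary is stated there without proof, as an immediate consequence of Theorem~\ref{t:w}, and your case-by-case reading of the four descriptions in that theorem (together with Lemma~\ref{l:super_basic} to handle $0\in\sigma(A)$ when $\ker A=\{0\}$) is exactly the expected argument. The one point you flagged as subtle---that $0$ cannot be isolated in $\sigma(A)$ when $\ker A=\{0\}$---is precisely the only place where something beyond Theorem~\ref{t:w} is needed, and your use of Lemma~\ref{l:super_basic} there is correct.
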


\section{The co-numerical range}\label{s:cow}
It follows from Theorem \ref{t:w} that the Krein space numerical range $W(A)$ of a non-negative operator $A$ is always neither bounded from above nor from below. The spectral inclusion $\sigma(A)\subset\ol{W(A)}$ in Corollary \ref{c:easy} is thus not very useful, especially when the operator $A$ is bounded. For this reason we next define the {\it co-numerical range} of the non-negative operator $A$ in the Krein space $(\calK,\product)$ by
$$
W_{\rm co}(A) := \left\{\frac{[Ax,Ax]}{[Ax,x]} : x\in\dom A,\,Ax\ne 0\right\}.
$$
To motivate this definition, assume that the operator $A$ is bounded and boundedly invertible. Then $(\calK,[A\cdot,\cdot])$ is a Hilbert space, and $W_{\rm co}(A)$ is just the numerical range of the selfadjoint operator $A$ in this Hilbert space. Thus, $W_{\rm co}(A)\setminus\{\nu_-,\nu_+\} = (\nu_-,\nu_+)$ which locates the spectrum of $A$ much better than the numerical range $W(A)$ which, in this case, satisfies $W(A)\setminus\{\mu_-,\mu_+\} = \R\setminus [\mu_-,\mu_+]$. The main result in this section, Theorem \ref{t:cow}, generalizes the above observation.


The next lemma strengthens the statement in \eqref{e:carsten} in the case when $A$
 is bounded.

\begin{lem}\label{l:stronger}
Assume that $A$ is a bounded non-negative operator in the Krein space $(\calK,\product)$
  and let $(x_n)$   be a bounded sequence in  $\calK$ such that $[Ax_n,x_n]\to 0$ as
 $n\to\infty$. Then $Ax_n\to 0$ as $n\to\infty$.
\end{lem}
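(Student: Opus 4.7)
The plan is to apply the Cauchy--Bunyakowski inequality $\bigl|[Au,v]\bigr|^2\le[Au,u][Av,v]$ (the one already used in the derivation of \eqref{e:carsten}) with a well-chosen second argument. The idea is to pick $v$ so that $[Ax_n,v]$ equals a power of $\|Ax_n\|$, and the only natural way to convert the Krein-space bracket into a Hilbert-space norm is to apply a fundamental symmetry.

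Concretely, I would fix a fundamental decomposition $\calK=\calK_+[\ds]\calK_-$ with the associated Hilbert inner product $\hproduct$ and norm $\|\cdot\|$, and let $J$ denote the corresponding fundamental symmetry $Jx=x_+-x_-$. The standard identities $[u,v]=(Ju,v)$, $J^2=I$, and the fact that $J$ is a $\|\cdot\|$-isometry (so that $\|JA\|=\|A\|$) are the only structural facts needed.

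Now set $u:=x_n$ and $v:=JAx_n$. On the one hand,
$$
[Ax_n,JAx_n]=(JAx_n,JAx_n)=\|JAx_n\|^2=\|Ax_n\|^2,
$$
so the left-hand side of Cauchy--Bunyakowski is $\|Ax_n\|^4$. On the other hand, Cauchy--Schwarz in the Hilbert space gives
$$
[A(JAx_n),JAx_n]=(JA\,JAx_n,JAx_n)\le\|JA\,JAx_n\|\,\|JAx_n\|\le\|A\|\,\|Ax_n\|^2.
$$
Plugging both estimates into the Cauchy--Bunyakowski inequality yields
$$
\|Ax_n\|^4\,\le\,\|A\|\,[Ax_n,x_n]\,\|Ax_n\|^2.
$$
Whenever $Ax_n\ne0$ one cancels $\|Ax_n\|^2$ to obtain $\|Ax_n\|^2\le\|A\|\,[Ax_n,x_n]\to0$, and the remaining case is trivial, so $Ax_n\to 0$.

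There is no real obstacle: the only non-routine step is the choice $v=JAx_n$, which is motivated by the desire to produce $\|Ax_n\|^2$ out of the Krein bracket $[Ax_n,v]$. I note in passing that the hypothesis that $(x_n)$ be bounded is not actually used in the argument; what matters is only that $A$ itself is bounded, so that $\|A\|<\infty$ appears as an honest constant in the final estimate.
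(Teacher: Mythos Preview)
Your proof is correct and follows essentially the same route as the paper: apply the Cauchy--Bunyakowski inequality for the semi-definite form $[A\cdot,\cdot]$ with the choice $v=JAx_n$, obtaining $\|Ax_n\|^4\le[Ax_n,x_n]\,[AJAx_n,JAx_n]$. The paper simply notes that the second factor remains bounded (this is where the boundedness of $(x_n)$ enters), whereas you sharpen it to $[AJAx_n,JAx_n]\le\|A\|\,\|Ax_n\|^2$ and cancel, which---as you correctly observe---renders the boundedness hypothesis on $(x_n)$ unnecessary.
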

\begin{proof}
Let $J$ be the fundamental symmetry corresponding to the fundamental decomposition
 \eqref{e:fd}. An application of the Cauchy-Bunyakowski inequality to the semi-definite
 inner product $[A\cdot,\cdot]$ gives
$$
\|Ax_n\|^2 = [Ax_n,JAx_n] \leq [Ax_n,x_n] [AJAx_n,JAx_n],
$$
which tends to zero as $n\to\infty$.
\end{proof}

Similar techniques and ideas as in the following proposition and its proof can be found in, e.g., \cite{l} and \cite{a}.

\begin{prop}\label{p:ext}
Assume that $A$ is a bounded non-negative operator in the Krein space $(\calK,\product)$. On the space $\calK_0 := \calK/\ker A$ define the inner product $\lk\cdot\,,\cdot\rk$ and the operator $A_0$ by
$$
\lk[x],[y]\rk := [Ax,y]\quad\text{and}\quad A_0[x] := [Ax],\quad x,y\in\calK,
$$
respectively. Then $(\calK_0,\lk\cdot\,,\cdot\rk)$ is a pre-Hilbert space, and the operator $A_0$ is bounded and symmetric in $(\calK_0,\lk\cdot\,,\cdot\rk)$. By $\wt\calK$ denote the completion of $(\calK_0,\lk\cdot\,,\cdot\rk)$ and by $\wt A$ the bounded selfadjoint extension of $A_0$ in $\wt\calK$. Then we have
\begin{equation}\label{e:sigmas}
\sigma(\wt A)\setminus\{0\} = \sigma(A)\setminus\{0\}.
\end{equation}
Moreover, $0\in\rho(\wt A)$ if and only if either $0\in\rho(A)$ or zero is an isolated eigenvalue of $A$ such that $\ker A = \ker A^2$.
\end{prop}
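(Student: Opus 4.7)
The sesquilinear form $\lk[x],[y]\rk:=[Ax,y]$ descends to $\calK_0=\calK/\ker A$ by the Krein-selfadjointness of $A$ (each argument is annihilated by $\ker A$), is Hermitian, and by \eqref{e:carsten} satisfies $\lk[x],[x]\rk=0\iff x\in\ker A\iff[x]=0$, hence is positive definite. The operator $A_0$ is well-defined on the quotient and symmetric, since $\lk A_0[x],[y]\rk=[A^2x,y]=[Ax,Ay]=\lk[x],A_0[y]\rk$.

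The plan is to realize $\wt A$ as (a piece of) a Hilbert-space selfadjoint operator. Fix a fundamental symmetry $J$ of $(\calK,\product)$ and set $B:=JA$. Then $B$ is a bounded non-negative selfadjoint operator on $(\calK,\hproduct)$ with $\ker B=\ker A$ and $A=JB$. Since $\lk[x],[y]\rk=(Bx,y)=(B^{1/2}x,B^{1/2}y)$, the map $U[x]:=B^{1/2}x$ is a Hilbert isometry from $\calK_0$ onto $\ran B^{1/2}$, extending to a Hilbert isomorphism $\wt U\colon\wt\calK\to\ol{\ran B^{1/2}}=(\ker A)^\perp$. With $T:=B^{1/2}JB^{1/2}$ (bounded Hilbert-selfadjoint), the identity $B^{1/2}A=B^{1/2}JB=TB^{1/2}$ gives $UA_0=TU$ on $\calK_0$. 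Hence $A_0$ is bounded, its unique extension to a bounded selfadjoint operator $\wt A$ on $\wt\calK$ exists, and $\wt A$ is unitarily equivalent via $\wt U$ to $T|_{(\ker A)^\perp}$.

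The spectral identity \eqref{e:sigmas} now follows from the classical fact $\sigma(PQ)\setminus\{0\}=\sigma(QP)\setminus\{0\}$, applied with $P:=B^{1/2}$ and $Q:=JB^{1/2}$: one has $PQ=T$ and $QP=JB=A$, so $\sigma(T)\setminus\{0\}=\sigma(A)\setminus\{0\}$. Since $T$ annihilates $\ker A=\ker B^{1/2}$ and preserves $(\ker A)^\perp$, the spectra $\sigma(T)$ and $\sigma(T|_{(\ker A)^\perp})$ can differ only at $0$, and hence
\begin{equation*}
\sigma(\wt A)\setminus\{0\}\,=\,\sigma(T|_{(\ker A)^\perp})\setminus\{0\}\,=\,\sigma(T)\setminus\{0\}\,=\,\sigma(A)\setminus\{0\}.
\end{equation*}

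For the characterization of $0\in\rho(\wt A)$ I would split into four cases. If $0\in\rho(A)$, then $B$ is boundedly invertible on $(\calK,\hproduct)$, hence so are $B^{1/2}$ and $T$, giving $0\in\rho(\wt A)$. If $0$ is an isolated eigenvalue of $A$ with $\ker A=\ker A^2$, then by Lemma \ref{l:super_basic} the root subspace $\calS_0$ equals $\ker A$, and the Krein-selfadjoint spectral projection $E_0$ onto $\calS_0$ yields a Krein-orthogonal decomposition $\calK=\ker A\,[\ds]\,\calK_1$ with $A_1:=A|\calK_1$ non-negative and $0\in\rho(A_1)$; under the natural isomorphism $\calK_0\cong\calK_1$ the form $\lk\cdot,\cdot\rk$ becomes $[A_1\cdot,\cdot]$, which is a Hilbert inner product by the first case, so $\wt\calK=\calK_1$ and $\wt A=A_1$, whence $0\in\rho(\wt A)$. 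Conversely, if $0$ is not isolated in $\sigma(A)$, then by \eqref{e:sigmas} a sequence in $\sigma(\wt A)$ accumulates at $0$, forcing $0\in\sigma(\wt A)$ by closedness; and if $0$ is isolated with $\ker A\neq\ker A^2$, then any $x\in\ker A^2\setminus\ker A$ satisfies $A_0[x]=[Ax]=0$ (because $Ax\in\ker A$) while $\lk[x],[x]\rk=[Ax,x]\neq 0$ by \eqref{e:carsten}, so $[x]\in\ker\wt A\setminus\{0\}$ and $0\in\sigma(\wt A)$. The main obstacle is constructing the unitary equivalence $\wt A\cong T|_{(\ker A)^\perp}$; once that is in hand, everything else reduces to a standard Hilbert-space spectrum argument combined with a short Jordan-chain analysis at $0$.
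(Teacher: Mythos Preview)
Your proof is correct and takes a genuinely different route from the paper's. The paper argues directly with approximate eigensequences: boundedness of $A_0$ is obtained via Reid's inequality applied to $S=JA$ and $K=A^2$, and for \eqref{e:sigmas} it passes back and forth between singular sequences for $A$ and for $\wt A$, invoking Lemma~\ref{l:stronger} (a Cauchy--Schwarz argument showing that $[Ax_n,x_n]\to 0$ with $(x_n)$ bounded forces $Ax_n\to 0$) to rule out collapse of the norms. For the characterization at zero the paper uses the decomposition $\calK=\ker A\,[\ds]\,\ran A$ together with a closed-range argument and Lemma~\ref{l:stronger} once more.

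Your approach instead builds a concrete Hilbert-space model: via $B=JA\ge 0$ and the isometry $U[x]=B^{1/2}x$ you realize $\wt A$ as $T|_{(\ker A)^\perp}$ with $T=B^{1/2}JB^{1/2}$, and then \eqref{e:sigmas} drops out of the classical identity $\sigma(PQ)\setminus\{0\}=\sigma(QP)\setminus\{0\}$ with $PQ=T$, $QP=A$. This is more structural: it bypasses both Reid's inequality and Lemma~\ref{l:stronger}, yields boundedness of $A_0$ for free from the intertwining $UA_0=TU$, and makes the spectral statement a one-liner. The paper's argument, in exchange, is more self-contained (no square roots, no $\sigma(PQ)=\sigma(QP)$), and its auxiliary Lemma~\ref{l:stronger} is of independent use later in the section. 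Your treatment of the four cases at zero is essentially parallel to the paper's, though your Case~2 (isolated eigenvalue with $\ker A=\ker A^2$) is a bit cleaner since you reduce it to Case~1 applied to $A|\calK_1$ rather than arguing via closed ranges.
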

\begin{proof}
It is evident that $(\calK_0,\lk\cdot\,,\cdot\rk)$ is a
pre-Hilbert space and that $A_0$ is symmetric with respect
to $\lk\cdot\,,\cdot\rk$. By $\III\cdot\III$ denote the norm on
$\calK_0$ induced by $\lk\cdot\,,\cdot\rk$. For the boundedness of $A_0$
let $x\in\calK$ and let $J$ be the fundamental symmetry corresponding to the
fundamental decomposition \eqref{e:fd}.
Then apply Reid's inequality (see, e.g., \cite{r}) to
the operators  $S := JA$ and $K := A^2$ to obtain
$$
\III A_0[x]\III^2 = \III[Ax]\III^2 = [A^3x,x] = (SKx,x)\le\|A^2\|[Ax,x]
\le\|A\|^2\III[x]\III^2,
$$
and $\wt A$ is a bounded operator.

In order to prove the inclusion $\sigma(A)\setminus\{0\}\subset\sigma(\wt A)$
let $\la\in\sigma(A)\setminus\{0\}$. Then there exists a sequence $(x_n)$ in
 $\calK$ with
$\|x_n\| = 1$ for all $n\in\N$ and $(A - \la)x_n\to 0$ as $n\to\infty$. Hence,
$$
\III(A_0 - \la)[x_n]\III^2 = [A(A - \la)x_n,(A - \la)x_n]\to 0
$$
as $n\to\infty$. Assume $\liminf_{n\to\infty}\III[x_n]\III = 0$.
Then for a subsequence $(x_{n_k})$ of $(x_n)$
we have $[Ax_{n_k},x_{n_k}]\to 0$ and, by Lemma \ref{l:stronger},
$Ax_{n_k}\to 0$ as $k\to\infty$.
As this is not possible due
to $(A - \la)x_n\to 0$ as $n\to\infty$ and $\la\neq 0$, we obtain $\la\in\sigma(\wt A)$.

Contrary, let $\la\in\sigma(\wt A)\setminus\{0\}$.
Then there exists a sequence $([x_n])$ in $\calK_0$ with
 $\III[x_n]\III = 1$ for each $n\in\N$ and $\III (A_0 - \la)[x_n]\III\to 0$ as
$n\to\infty$. That is,
$$
[Ax_n,x_n] = 1 \text{ and}
\quad [A(A - \la)x_n,(A - \la)x_n]\to 0\,\text{ as }n\to\infty.
$$
The second relation, together with Lemma \ref{l:stronger}, implies
\begin{equation*}
(A - \la)Ax_n\to 0\quad\text{as }n\to\infty.
\end{equation*}
Set $y_n:=Ax_n$.  Assume $\liminf_{n\to\infty}\|y_n\| = 0$.
Then for a subsequence $(y_{n_k})$ of $(y_n)$
we have $\|y_{n_k}\|\to 0$ and
$\III [y_{n_k}]\III^2 \leq \|A\| \|y_{n_k}\|^2\to 0$ as  $n\to\infty$. Hence
$$
0= \liminf_{k\to\infty}\III (A_0 - \la)[x_{n_k}]\III =
 \liminf_{k\to\infty}\III [y_{n_k}]  - \la[x_{n_k}]\III = |\lambda |,
$$
a contradiction. Relation \eqref{e:sigmas} is shown.

Assume that $0\in\rho(A)$ or that zero is an isolated eigenvalue of $A$ such that $\ker A = \ker A^2$. Then the spectral subspace of $A$ corresponding to zero coincides with $\ker A$. Hence, the inner product space $(\ker A,\product)$ is a Krein space, and the spectral subspace of $A$ corresponding to the spectral set $\sigma(A)\setminus\{0\}$ coincides with $\ran A$. In particular, $\ran A$ is closed, and we have
$$
\calK = \ker A\,[\ds]\,\ran A.
$$
This implies that $\ran A^2 = A\ran A = \ran A$ is closed. Thus, there exists a $\delta > 0$ such that $\|A^2x\|\ge\delta\|x\|$ for all $x\in\ran A$. Suppose now that $0\in\sigma(\wt A)$. Then there exists a sequence $([x_n])$ in $\calK_0$ with $\III[x_n]\III = 1$ for $n\in\N$ and $\III A_0[x_n]\III\to 0$ as $n\to\infty$. It is no restriction to assume $x_n\in\ran A$, $n\in\N$. Then $[A^2x_n,Ax_n]\to 0$ as $n\to\infty$, and Lemma \ref{l:stronger} implies $A^2x_n\to 0$ as $n\to\infty$. Thus, we obtain $x_n\to 0$ as $n\to\infty$ and
$$
1 = \III[x_n]\III^2 = [Ax_n,x_n]\to 0\quad\text{as }n\to\infty,
$$
which is a contradiction. Therefore, $0\in\rho(\wt A)$.

Conversely, assume that $0\in\rho(\wt A)\cap\sigma(A)$. Then by \eqref{e:sigmas} and Lemma \ref{l:super_basic} zero is an isolated eigenvalue of $A$. Suppose that there exists $x_0$ in the kernel of $A$ with $Ax_1 = x_0$ for some $x_1\in\calK$. Then we have
\begin{equation*}
A_0[x_1] = [Ax_1] = [x_0] = [0].
\end{equation*}
By assumption, $\wt A$ is injective, hence $x_1\in\ker A$. Therefore, $\ker A = \ker A^2$.
\end{proof}

In the next corollary we characterize the closure of the co-numerical range of a bounded non-negative operator $A$ in terms of the spectra of $\wt A$ and $A$.

\begin{cor}\label{cor}
Let $A$ and $\wt A$ be as in Proposition {\rm\ref{p:ext}}. Then we have
\begin{equation}\label{e:Ilmenau}
\ov{W_{\rm co}(A)}= [\min\sigma(\wt A),\max\sigma(\wt A)].
\end{equation}
If zero is not an isolated eigenvalue of $A$ or if both $\sigma(A)\cap\R^+$ and $\sigma(A)\cap\R^-$ are non-empty, then the following two relations hold:
\begin{enumerate}
\item[{\rm (a)}] $\min\sigma(\wt A) = \min\sigma(A)$.
\item[{\rm (b)}] $\max\sigma(\wt A) = \max\sigma(A)$.
\end{enumerate}
Let zero be an isolated eigenvalue of $A$. Then:
\begin{enumerate}
\item[{\rm (i)}]   If $\sigma(A)\cap\R^+ = \emptyset$ and $\sigma(A)\cap\R^-\neq\emptyset$, then {\rm (a)} holds, and {\rm (b)} holds if and only if $\ker A\neq\ker A^2$. Otherwise, $\max\sigma(\wt A) = \mu_- < 0$.
\item[{\rm (ii)}]  If $\sigma(A)\cap\R^+\neq\emptyset$ and $\sigma(A)\cap\R^- = \emptyset$, then {\rm (b)} holds, and {\rm (a)} holds if and only if $\ker A\neq\ker A^2$. Otherwise, $\min\sigma(\wt A) = \mu_+ > 0$.
\item[{\rm (iii)}] If $\sigma(A) = \{0\}$, then either $A = 0$ and hence $W_{\rm co}(A) = \emptyset$ or $\ker A\neq\ker A^2$ in which case $W_{\rm co}(A) = \{0\}$.
\end{enumerate}
\end{cor}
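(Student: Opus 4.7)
My plan is to first identify the co-numerical range with a standard Hilbert-space numerical range via the construction of Proposition \ref{p:ext}. Using \eqref{e:carsten} together with boundedness of $A$, the conditions ``$Ax\neq 0$'', ``$[Ax,x]\neq 0$'' and ``$[x]\neq 0$ in $\calK_0$'' are pairwise equivalent. A direct computation yields
$$
\frac{[Ax,Ax]}{[Ax,x]}\,=\,\frac{[A^2x,x]}{[Ax,x]}\,=\,\frac{\lk A_0[x],[x]\rk}{\lk[x],[x]\rk},
$$
so $W_{\rm co}(A)$ equals the numerical range of $A_0$ on the pre-Hilbert space $(\calK_0,\lk\cdot,\cdot\rk)$. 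Since $\calK_0$ is dense in $\wt\calK$ and $\wt A$ is bounded, this numerical range has the same closure as $W(\wt A)$, and the classical fact that the closure of the numerical range of a bounded selfadjoint Hilbert-space operator coincides with the convex hull of its spectrum gives \eqref{e:Ilmenau}.

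The remaining assertions then reduce, via Proposition \ref{p:ext}, to comparing the endpoints of $\sigma(\wt A)$ and $\sigma(A)$. Using $\sigma(\wt A)\setminus\{0\}=\sigma(A)\setminus\{0\}$ I would handle the generic case first: if $0\notin\sigma(A)$ the equality $\sigma(\wt A)=\sigma(A)$ is immediate, while if $0\in\sigma(A)$ is not isolated then $0$ is a limit point of $\sigma(\wt A)\setminus\{0\}$ and hence belongs to $\sigma(\wt A)$; and if $0$ is an isolated eigenvalue but both $\sigma(A)\cap\R^\pm$ are non-empty, then $\min\sigma(A)=\nu_-<0$ and $\max\sigma(A)=\nu_+>0$ are attained on $\sigma(A)\setminus\{0\}$. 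Either way, (a) and (b) follow.

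For (i) and (ii) I would invoke the criterion from Proposition \ref{p:ext} that $0\in\rho(\wt A)$ is equivalent to $\ker A=\ker A^2$ (when $0$ is an isolated eigenvalue). In (i), $\sigma(A)=\{0\}\cup(\sigma(A)\cap\R^-)$, so $\max\sigma(A)=0$; if $\ker A=\ker A^2$ then $0\in\rho(\wt A)$ and $\max\sigma(\wt A)=\mu_-<0$, whereas if $\ker A\neq\ker A^2$ then $0\in\sigma(\wt A)$ and $\max\sigma(\wt A)=0=\max\sigma(A)$; meanwhile (a) always holds because $\min\sigma(\wt A)=\nu_-=\min\sigma(A)$. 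Case (ii) is fully symmetric.

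Case (iii) needs a separate argument. If $A=0$ then $W_{\rm co}(A)=\emptyset$ by definition. Otherwise $\sigma(A)=\{0\}$ with $A\neq 0$ forces $\calK_0\neq\{0\}$, hence $\wt\calK\neq\{0\}$, so $\sigma(\wt A)\neq\emptyset$; combined with $\sigma(\wt A)\subseteq\{0\}$ from Proposition \ref{p:ext} this gives $\sigma(\wt A)=\{0\}$, and the boundedness and selfadjointness of $\wt A$ on a non-trivial Hilbert space force $\wt A=0$. Consequently $[A^2x,x]=\lk A_0[x],[x]\rk=0$ for all $x$, so $W_{\rm co}(A)=\{0\}$; and $0\in\sigma(\wt A)$ combined with the criterion of Proposition \ref{p:ext} forces $\ker A\neq\ker A^2$. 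The main obstacle is administrative rather than conceptual — organizing the case analysis cleanly — with the only genuinely non-routine step being the deduction $\wt A=0$ from $\sigma(\wt A)=\{0\}$ and the verification that this yields $W_{\rm co}(A)$ itself (not merely its closure) to be $\{0\}$.
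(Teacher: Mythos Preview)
Your proof is correct and follows essentially the same route as the paper: identify $W_{\rm co}(A)$ with the numerical range of $A_0$ on the pre-Hilbert space $\calK_0$, pass to $\wt A$ by density, and then read off the endpoints of $\sigma(\wt A)$ versus $\sigma(A)$ from \eqref{e:sigmas} together with the $0\in\rho(\wt A)\Leftrightarrow\ker A=\ker A^2$ criterion of Proposition~\ref{p:ext}. The only cosmetic difference is in (iii), which the paper dismisses as trivial (since $\sigma(A)=\{0\}$ forces $A^2=0$ by Lemma~\ref{l:super_basic}); your deduction of $\wt A=0$ is equivalent to this, just phrased on the quotient side.
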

\begin{proof}
We have
\begin{align*}
\ov{W_{\rm co}(A)} & =\ov{ \left\{\frac{[Ax,Ax]}{[Ax,x]} : x\in \calK,\,Ax\ne 0\right\}}
= \ov{ \left\{\frac{\lk A_0[x],[x]\rk }{\lk[x],[x]\rk } :
x\in \calK,\,Ax\ne 0\right\}}\\[1ex]
&=\ov{ \left\{\frac{\lk A_0[x],[x]\rk }{\lk[x],[x]\rk } :
[x]\in \calK_0,\,[x]\ne [0]\right\}}
= [\min\sigma(\wt A), \max\sigma(\wt A)],
\end{align*}
and  \eqref{e:Ilmenau} is shown.
The last equality is a consequence of $\ol{A_0} = \wt A$ in $\wt\calK$ and well-known properties of the numerical range of a selfadjoint operator in a Hilbert space.

If both $\sigma(A)\cap\R^+$ and $\sigma(A)\cap\R^-$ are non-empty, then (a) and (b) follow directly from \eqref{e:sigmas}. Assume that $0\in\rho(A)$. Then, by Proposition \ref{p:ext}, we also have $0\in\rho(\wt A)$ and thus $\sigma(A) = \sigma(\wt A)$. In particular, both (a) and (b) are satisfied. Also, if $0\in\sigma(A)$ is not an isolated eigenvalue of $A$, Proposition \ref{p:ext} yields that $0\in\sigma(\wt A)$ and hence the validity of (a) and (b).

Now, assume that zero is an isolated eigenvalue of $A$. In order to prove (i), let $\sigma(A)\cap\R^+ = \emptyset$ and $\sigma(A)\cap\R^-\neq\emptyset$. Then, clearly, (a) holds, and (b) holds if and only if $0\in\sigma(\wt A)$. And as zero is an isolated eigenvalue of $A$, by Proposition \ref{p:ext} this is equivalent to $\ker A\neq\ker A^2$. If this is not the case, then $0\in\rho(\wt A)$, and $\max\sigma(\wt A) = \max(\sigma(A)\setminus\{0\}) = \mu_-$.

Finally, (ii) follows from (i), applied to $-A$ and $-\product$ instead of $A$ and $\product$, and (iii) is trivial.
\end{proof}

We now prove our main result on the co-numerical range of $A$.

\begin{thm}\label{t:cow}
Assume that $A$ is a non-negative operator in the Krein space $(\calK,\product)$.
Let $\calK$ be indefinite and $A\neq 0$. Then the following statements hold.
\begin{enumerate}
\item[{\rm (i)}]   If $\ran A$ is negative, then $\sigma(A)\cap\R^+ = \emptyset$ and
\begin{equation}\label{17}
W_{\rm co}(A)\setminus\{\nu_-,\mu_-\} = (\nu_-,\mu_-).
\end{equation}
\item[{\rm (ii)}]  If $\ran A$ is positive, then $\sigma(A)\cap\R^- = \emptyset$ and
$$
W_{\rm co}(A)\setminus\{\mu_+,\nu_+\} = (\mu_+,\nu_+).
$$
\item[{\rm (iii)}] If $\ran A$ is indefinite, then
\begin{equation*}
W_{\rm co}(A)\setminus\{\nu_-,\nu_+\} = (\nu_-,\nu_+).
\end{equation*}
\end{enumerate}
Moreover, the following holds for the points $\mu_\pm$, $\nu_\pm$ and $0$:
\begin{enumerate}
\item[{\rm (a)}] $0\in W_{\rm co}(A)$ if and only if $\ran A$ is indefinite.
\item[{\rm (b)}] $\mu_-\in W_{\rm co}(A)$ $(\nu_-\in W_{\rm co}(A))$ in {\rm (i)} if and only if $\mu_-\in\sigma_p(A)\setminus\{0\}$ $($resp. $\nu_-\in\sigma_p(A))$.
\item[{\rm (c)}] $\mu_+\in W_{\rm co}(A)$ $(\nu_+\in W_{\rm co}(A))$ in {\rm (ii)} if and only if
$\mu_+\in\sigma_p(A)\setminus\{0\}$ $($resp. $\nu_+\in\sigma_p(A))$.
\item[{\rm (d)}] $\nu_-\in W_{\rm co}(A)$ $(\nu_+\in W_{\rm co}(A))$ in {\rm (iii)} if and only if
$\nu_-\in\sigma_p(A)$ $($resp. $\nu_+\in\sigma_p(A))$.
\end{enumerate}
\end{thm}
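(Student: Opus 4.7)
The plan is to translate $W_{\rm co}(A)$ into the numerical range of a symmetric operator on a pre-Hilbert space and combine Corollary~\ref{cor} with the Toeplitz--Hausdorff theorem. The short parts go first. For the spectral exclusions in~(i)--(iii), if $\ran A$ were negative and $\la\in\sigma(A)\cap\R^+$, an open interval $\Delta\subset\R^+$ with $\la\in\Delta$ would make $(E(\Delta)\calK,\product)$ a Hilbert space on which $A$ is bounded, positive, selfadjoint with $\la$ in its spectrum, so some $x\in E(\Delta)\calK$ would have $Ax\ne 0$ and $[Ax,Ax]>0$, contradicting negativity of $\ran A$; case~(ii) is symmetric, and~(iii) imposes no restriction. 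Part~(a) follows from~\eqref{e:carsten}, which turns $Ax\ne 0$ into $[Ax,x]\ne 0$, so that $0\in W_{\rm co}(A)$ iff $\ran A$ contains a nonzero neutral vector, iff $\ran A$ is indefinite (the nontrivial direction uses a real convex combination of a positive and a negative element of $\ran A$).

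For the interval statements I would introduce the quotient $\calK_0 := \dom A/\ker A$ with the positive definite inner product $\lk[x],[y]\rk := [Ax,y]$ (well-defined and strictly definite by~\eqref{e:carsten}) and the symmetric operator $A_0[x]:=[Ax]$ on $\{[x]:x\in\dom A^2\}$. Selfadjointness of $A$ gives
$$
\frac{\lk A_0[x],[x]\rk}{\lk[x],[x]\rk}=\frac{[A^2x,x]}{[Ax,x]}=\frac{[Ax,Ax]}{[Ax,x]},
$$
so the Hilbert-style numerical range $W(A_0)$ is contained in $W_{\rm co}(A)$, with equality when $A$ is bounded. By the Toeplitz--Hausdorff theorem $W(A_0)$ is convex, hence an interval in $\R$, and the task reduces to identifying its closure and checking which endpoints are attained.

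For bounded $A$, Corollary~\ref{cor} does exactly this. In case~(i), negativity of $\ran A$ rules out a Jordan chain of length two at zero (otherwise Lemma~\ref{l:super_basic} would put a nonzero neutral vector into $\ran A$), so $\ker A=\ker A^2$ and Corollary~\ref{cor}(i) yields $\ov{W_{\rm co}(A)}=[\nu_-,\mu_-]$. Case~(ii) follows by symmetry ($A\mapsto -A$, $\product\mapsto-\product$). In case~(iii), indefiniteness of $\ran A$ always forces endpoints $\nu_\pm$: if $\sigma(A)$ meets both half-lines the statement is Corollary~\ref{cor}(a,b); if only one half-line is met, the existence of a neutral vector in $\ran A$ forces zero to be either a non-isolated spectral point or an isolated eigenvalue with $\ker A\ne\ker A^2$, so Corollary~\ref{cor}(i) or~(ii) delivers the same endpoints; the degenerate $\sigma(A)=\{0\}$ subcase is handled by Corollary~\ref{cor}(iii). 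For unbounded $A$ I would reduce via truncations $A_n:=A|E(\Delta_n)\calK$ with $\Delta_n:=[-n,-1/n]\cup[1/n,n]\in\frakR$: each $A_n$ is bounded non-negative with $W_{\rm co}(A_n)\subset W_{\rm co}(A)$, and for $x\in\dom A$ with $Ax\ne 0$ the vector $x_n:=E(\Delta_n)x$ satisfies $Ax_n=E(\Delta_n)Ax$; the selfadjointness of $E(\Delta_n)$ then gives $[Ax_n,Ax_n]=[E(\Delta_n)Ax,Ax]\to[Ax,Ax]$ and $[Ax_n,x_n]=[E(\Delta_n)Ax,x]\to[Ax,x]$, so the ratios pass to the limit and the bounded identification of $\ov{W_{\rm co}}$ transfers.

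The endpoint statements~(b)--(d) follow from two ingredients. First, for a symmetric operator on a (pre-)Hilbert space the extreme point $\la=\max\sigma(\wt A)$ (respectively $\min\sigma(\wt A)$) lies in $W(A_0)$ iff it is an eigenvalue of $A_0$, by the standard Cauchy--Schwarz argument using $\la-A_0\ge 0$. Second, for $\la\ne 0$ the relation $A_0[x]=\la[x]$ with $[x]\ne[0]$ is equivalent to $\la\in\sigma_p(A)$: the forward direction gives $A(Ax-\la x)=0$ with $Ax\ne 0$, so $Ax$ is a nonzero $\la$-eigenvector of $A$, while the converse is immediate. I expect the main obstacle to be the unbounded reduction, where one must verify that the strong convergence $E(\Delta_n)Ax\to Ax$ really holds for every relevant $x\in\dom A$ despite $0$ and $\infty$ being possible critical points of $A$.
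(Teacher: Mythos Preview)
Your overall strategy---pass to the quotient pre-Hilbert space, invoke Corollary~\ref{cor} for the bounded case, then reduce the unbounded case by truncation---is exactly the route the paper takes, and your endpoint argument via Cauchy--Schwarz on the semidefinite form $\nu_+[A\cdot,\cdot]-[A\cdot,A\cdot]$ is in fact cleaner than the paper's direct spectral decomposition for~(d). The genuine problem is your choice of truncation.

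By taking $\Delta_n=[-n,-1/n]\cup[1/n,n]$ you excise a neighbourhood of zero, and this breaks the argument in two places. First, for the upper inclusion you need $E(\Delta_n)Ax\to Ax$ (at least weakly), which amounts to $E((-1/n,1/n))Ax\to 0$; but in case~(iii) with $\mu_+=\mu_-=0$ the point $0$ can be a \emph{singular} critical point of $A$, and then the projections $E((-1/n,1/n))$ do not converge in any useful sense---your acknowledged ``main obstacle'' is not a technicality but an actual failure. Second, for the lower inclusion your $A_n$ satisfies $0\in\rho(A_n)$, so $\ran A_n=E(\Delta_n)\calK$; in the subcase of~(iii) where $\sigma(A)\cap\R^+=\emptyset$ and zero is an isolated eigenvalue with $\ker A\ne\ker A^2$, each $\ran A_n$ is \emph{negative}, so the bounded case~(i) applied to $A_n$ yields only $(\nu_{-,n},\mu_{-,n})\subset W_{\rm co}(A)$ with $\mu_{-,n}\to\mu_-<0$, and you never reach the correct right endpoint $\nu_+=0$.

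The paper's fix is simple: truncate only at infinity, with $E_n=E([-n,n])$. Remark~\ref{r:infty} guarantees that whenever the target interval is not all of $\R$ one of $\nu_\pm$ is finite, hence $\infty$ is not a critical point and $E_n\to I$ strongly; and since $0\in[-n,n]$ the case structure (definiteness of $\ran A_n$, presence of Jordan chains at zero) is preserved, so the bounded result for $A_n$ already gives the correct interval.
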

\begin{proof}
First of all, let us note that (ii) and (c) follow from (i) and (b), applied to $-A$ and $-\product$ instead of $A$ and $\product$. Moreover, $0\in W_{\rm co}(A)$ if and only if there exists $x\in\dom A$ with $Ax\ne 0$ and $[Ax,Ax] = 0$. From this, (a) follows. Hence, we only need to prove (i), (iii), (b) and (d). Concerning (i), we note that
\begin{equation}\label{e:kerker}
\ker A = \ker A^2\quad\text{if $\ran A$ is negative.}
\end{equation}
To see this, let $x\in\ker A^2$. Then $[Ax,Ax] = [A^2x,x] = 0$ and thus $Ax = 0$ as $\ran A$ is negative.

The rest of the proof is divided into two steps. In the first step, we prove (i) and (iii) in the case when $A$ is bounded. Then, (i), (iii), (b), and (d) are proved successively in the unbounded case.

1. In this step we assume that $A$ is bounded. If $\ran A$ is negative, then $W_{\rm co}(A)\subset \mathbb R^-$ and, by \eqref{e:sigmas} and \eqref{e:Ilmenau}, we obtain $\sigma(A)\cap\mathbb R^+= \sigma(\wt A)\cap\mathbb R^+= \emptyset$. From \eqref{e:kerker} and $A\ne 0$ it follows that $\sigma(A)\cap\mathbb R^-\ne\emptyset$ (i.e. $\nu_-\in\R^-$ and $\mu_- > -\infty$), and as $\cal K$ is indefinite, $0\in \sigma(A)$ (cf.\ \eqref{HerzHerz}). Relation \eqref{17} now follows from Corollary \ref{cor}, and (i) is shown for bounded operators.

Assume that $\ran A$ is indefinite. If both $\sigma(A)\cap\R^\pm$ are non-empty, then (iii) follows from Corollary \ref{cor}. Let $\sigma(A)\cap\R^+ = \emptyset$. If also $\sigma(A)\cap\R^- = \emptyset$, then $A^2 = 0$ and hence $W_{\rm co}(A) = \{0\}$ as well as $\nu_- = \nu_+ = 0$, and (iii) holds. Assume that $\sigma(A)\cap\R^-\neq\emptyset$. If zero is not an isolated eigenvalue of $A$, then (iii) again follows from Corollary \ref{cor}. If zero is an isolated eigenvalue of $A$, then $\ker A\neq\ker A^2$, since otherwise $\ran A = E(\sigma(A)\cap\R^-)\calK$ is negative. Therefore, Corollary \ref{cor} yields $\ol{W_{\rm co}(A)} = [\nu_-,0] = [\nu_-,\nu_+]$, and (iii) is proved.

2. Let $A$ be unbounded. For $n\in\mathbb N$ let $E_n := E([-n,n])$ and consider the operator $A_n := A|E_n\calK$ in the Krein space $(E_n\calK,\product)$. Replace in \eqref{Grins1} $A$ by $A_n$ and denote the corresponding constants by $\mu_{\pm,n}$ and $\nu_{\pm,n}$, respectively. We deduce from the first step that
\begin{equation}\label{MIB3}
(\nu_{-,n},\mu_{-,n}) = W_{\rm co}(A_n)\setminus\{\nu_{-,n},\mu_{-,n}\}\subset W_{\rm co}(A)
\end{equation}
if $\ran A_n$ is negative and
\begin{equation}\label{ITGirl}
(\nu_{-,n},\nu_{+,n}) = W_{\rm co}(A_n)\setminus\{\nu_{-,n},\nu_{+,n}\}\subset W_{\rm co}(A)
\end{equation}
if $\ran A_n$ is indefinite.

(i). Assume that $\ran A$ is negative. Then from \eqref{HerzHerz} we conclude that $\sigma(A)\cap\mathbb R^+ = \emptyset$. Hence, $\nu_- = -\infty$ since $A$ is unbounded. Moreover, $\ran A_n\subset\ran A$ is negative, and we have $\mu_{-,n} = \mu_-$ for large $n$ as well as $\nu_{-,n}\to -\infty$ as $n\to\infty$. Therefore, \eqref{MIB3} implies
$$
(-\infty, \mu_-)\subset W_{\rm co}(A).
$$
As $\sigma(A)\cap\R^+ = \emptyset$, the point $\infty$ is not a critical point of $A$, cf.\ Remark \ref{r:infty}, and hence, for $x\in\calK$ we have $E_nx\to x$ as $n\to\infty$. As also $AE_nx = E_nAx\to Ax$ for $x\in\dom A$, we obtain
$$
W_{\rm co}(A) \subset (-\infty, \mu_-],
$$
and (i) is shown.

(iii). Assume now that $\ran A$ is indefinite and $\nu_- = -\infty$. By \eqref{HerzHerz}, $\sigma(A)\cap [0,\infty)$ is nonempty. If $\sigma(A)\cap [0,\infty)$ is bounded, then $\nu_{+,n} = \nu_+$ for large $n$ and $\nu_{-,n}\to -\infty$ as $n\to\infty$. Now, using \eqref{ITGirl} instead of \eqref{MIB3}, we can proceed similarly as above to conclude that (iii) holds.

If $\sigma(A)\cap [0,\infty)$ is unbounded, then we have $\nu_\pm =\pm \infty$ and with \eqref{ITGirl}
$$
W_{\rm co}(A) = \R.
$$
Hence (iii) is shown for the case $\nu_-=-\infty$. The proof for $\nu_+ = \infty$ is similar.

It remains to show (b) and (d). In order to prove the first part of (b) let $x$ be an eigenvector corresponding to $\mu_- \in\sigma_p(A)\setminus\{ 0\}$. Then $[Ax,Ax] = \mu_-[Ax,x]$ and $\mu_-\in W_{\rm co}(A)$ follows. Conversely, let
$\mu_- \in  W_{\rm co}(A)$. By (i) and (a), $\mu_-<0$. Remark \ref{r:infty} implies that $\infty$ is not a critical point of $A$. Therefore, the operator $B := A|E((-\infty,\mu_-])\calK$ is well-defined and $\mu_-\in W_{\rm co}(B)$. But $B$ is a boundedly invertible selfadjoint operator in the Hilbert space $(E((-\infty,\mu_-])\calK,-\product)$. Hence,
$$
\mu_- = \frac{[Bx,Bx]}{[Bx,x]}\quad\text{implies}\quad\mu_-^{-1} = \frac{[B^{-1}y,y]}{[y,y]},
$$
where $y = Bx$. This proves $\mu_-^{-1}\in\sigma_p(B^{-1})$ and hence $\mu_-\in\sigma_p(A)$.

It remains to prove that in both cases (i) and (iii) we have $\nu_-\in W_{\rm co}(A)$ if and only if $\nu_-\in\sigma_p(A)$.  For the rest of the proof we thus assume that $\ran A$ is not positive. First, assume that $\nu_- = 0$. 
Then it follows from (i),  \eqref{e:kerker} and $A\ne 0$ that 
$\ran A$ is indefinite. Hence, (a) implies $\nu_-\in W_{\rm co}(A)$. Suppose that $\nu_-\notin\sigma_p(A)$. Then Lemma \ref{l:good_point}(ii) implies that $\sigma(A) = \sp(A)$ which contradicts our assumption that $\calK$ be indefinite. Hence, $\nu_-\in\sigma_p(A)$ follows.

Let $\nu_-\in\R^-$. Clearly, if $\nu_-\in\sigma_p(A)$, then $\nu_-\in W_{\rm co}(A)$. Assume that $\nu_-\in W_{\rm co}(A)$. Then there exists $x\in\dom A$ such that $Ax\neq 0$ and $[Ax,Ax] = \nu_-[Ax,x]$. If $\nu_-$ is an isolated spectral point of $A$, then it is an eigenvalue, and nothing is to prove. Hence, there exists some $\la_0\in\sigma(A)\cap(\nu_-,0)$. Let $\la\in (\nu_-,\la_0)$ be arbitrary, and define
$$
E_1 := E([\nu_-,\la])\quad\text{and}\quad E_2 := E((\la,\infty))
$$
as well as $\calK_j := E_j\calK$, $A_j := A|\calK_j$, and $x_j := E_jx$, $j=1,2$. Then $A_1$ is a bounded selfadjoint operator in the Hilbert space $(\calK_1,-\product)$ which yields
$$
[Ax_1,Ax_1]\,\ge\,\nu_-[Ax_1,x_1].
$$
Moreover, $A_2$ is a non-negative operator in the Krein space $(\calK_2,\product)$ with $\sigma(A_2)\subset [\la,\infty)$ and whose range is not positive (since $\la_0\in\sigma(A_2)$). By (i) and (iii), we have $\ol{W_{\rm co}(A_2)}\subset [\la,\infty)$. Hence,
$$
[Ax_2,Ax_2]\,\ge\,\la[Ax_2,x_2].
$$
Therefore, we obtain
\begin{align*}
\la[Ax_2,x_2] + \nu_-[Ax_1,x_1]
&\le[Ax_2,Ax_2] + [Ax_1,Ax_1]\\
&= [Ax,Ax] = \nu_-[Ax,x]\\
&= \nu_-[Ax_2,x_2] + \nu_-[Ax_1,x_1].
\end{align*}
As this implies $Ax_2 = 0$ (cf.\ \eqref{e:carsten}), we have $x\in E([\nu_-,\la])\calK\,[\ds]\,\ker A$ for every $\la\in (\nu_-,\la_0)$. Letting $\la\downto\nu_-$ gives
$$
x\in\ker(A - \nu_-)\,[\ds]\,\ker A.
$$
Since $Ax\neq 0$, this proves that $\ker(A - \nu_-)\neq\{0\}$ and thus $\nu_-\in\sigma_p(A)$.
\end{proof}

We close the paper with the following spectral inclusion theorem which follows 
from our two main results on the numerical range and the co-numerical range of 
a non-negative operator in a Krein space.

\begin{thm}\label{t:spec_inc}
Let $A\neq 0$ be a non-negative operator in the indefinite Krein space $(\calK,\product)$.
 If zero is an isolated eigenvalue of $A$ such that $\ker A = \ker A^2$ and that either
 $\mathbb R^+$ or $\mathbb R^-$ contains no spectrum of $A$, then
\begin{equation}\label{e:fast}
\sigma(A)\setminus\{0\}\,\subset\,\ol{W(A)\cap W_{\rm co}(A)}.
\end{equation}
In all other cases we have
\begin{equation}\label{e:ganz}
\sigma(A)\,\subset\,\ol{W(A)\cap W_{\rm co}(A)}.
\end{equation}
\end{thm}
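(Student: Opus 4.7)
The plan is to extract the inclusion directly from the explicit descriptions of $W(A)$ and $W_{\rm co}(A)$ in Theorems \ref{t:w} and \ref{t:cow} (together with Corollary \ref{cor}). I would split $\sigma(A)$ into its non-zero part and the point $0$ and treat them separately, invoking the exceptional/non-exceptional distinction of the statement only at the $0$-step.

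For $\sigma(A)\setminus\{0\}$, definition \eqref{Grins1} already places this set inside $[\nu_-,\mu_-]\cup[\mu_+,\nu_+]$. The key point is that, whenever non-degenerate, each of the open intervals $(\nu_-,\mu_-)$ and $(\mu_+,\nu_+)$ sits in $W(A)\cap W_{\rm co}(A)$. Inclusion in $W(A)$ is uniform across all four cases of Theorem \ref{t:w}, each of which gives $(-\infty,\mu_-)\cup(\mu_+,\infty)\subseteq W(A)$. Inclusion in $W_{\rm co}(A)$ is uniform across the three cases of Theorem \ref{t:cow}: depending on the signature of $\ran A$, one of the open intervals $(\nu_-,\mu_-)$, $(\mu_+,\nu_+)$, $(\nu_-,\nu_+)$ is contained in $W_{\rm co}(A)$, and when $\ran A$ is strictly definite the spectrum on the opposite side is empty. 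Taking closures covers every interior point; a degenerate case $\nu_\pm=\mu_\pm$ corresponds to an isolated spectral point, hence an eigenvalue by Lemma \ref{l:super_basic}, and the endpoint parts (b), (c) of Theorem \ref{t:w} together with (b)--(d) of Theorem \ref{t:cow} place it in $W(A)\cap W_{\rm co}(A)$. This already yields the unconditional inclusion $\sigma(A)\setminus\{0\}\subseteq\ol{W(A)\cap W_{\rm co}(A)}$, which is \eqref{e:fast}.

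Now suppose $0\in\sigma(A)$. If $0$ is not isolated in $\sigma(A)$, a sequence $\la_n\in\sigma(A)\setminus\{0\}$ converges to $0$, and the previous paragraph yields $0\in\ol{W(A)\cap W_{\rm co}(A)}$. Otherwise $0$ is an isolated spectral point, hence an eigenvalue by Lemma \ref{l:super_basic}, and two sub-cases remain outside the exception. If $\ker A\neq\ker A^2$, Lemma \ref{l:super_basic} gives a length-two Jordan chain whose eigenvector $x_0=Ax_1\in\ker A\cap\ran A$ is neutral; this single vector renders both $\ker A$ and $\ran A$ indefinite in the paper's sense, so Theorem \ref{t:w}(ii) gives $W(A)\supseteq\R\setminus\{0\}$ and Theorem \ref{t:cow}(iii) gives $W_{\rm co}(A)\supseteq(\nu_-,\nu_+)$. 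If instead $\ker A=\ker A^2$ while both $\sigma(A)\cap\R^+$ and $\sigma(A)\cap\R^-$ are non-empty, the spectral subspaces of opposite signature inside $\ran A$ again make $\ran A$ indefinite, so the same inclusion for $W_{\rm co}(A)$ holds, and the inclusion for $W(A)$ near $0$ follows from whichever of Theorem \ref{t:w}(ii)--(iv) matches the signature of $\ker A$. In either sub-case $W(A)\cap W_{\rm co}(A)$ accumulates at $0$.

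The main obstacle is not the individual inclusions but the bookkeeping: the signatures of $\ker A$ and $\ran A$ and the distribution of non-zero spectrum interact in several configurations, and one has to verify that the exception in the statement is precisely the configuration in which Corollary \ref{cor} drives $\ol{W_{\rm co}(A)}$ into a half-line bounded away from $0$. Outside that configuration the intersection $W(A)\cap W_{\rm co}(A)$ accumulates at $0$ by the argument above; inside it only $\sigma(A)\setminus\{0\}$ can be captured, and the weaker inclusion \eqref{e:fast} is the best one can hope for.
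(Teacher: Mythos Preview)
Your overall strategy coincides with the paper's: both derive the inclusion directly from the explicit descriptions of $W(A)$ and $W_{\rm co}(A)$ in Theorems \ref{t:w} and \ref{t:cow}, treating $\sigma(A)\setminus\{0\}$ first via the intervals $(\nu_-,\mu_-)$ and $(\mu_+,\nu_+)$ and then handling $0$ separately by case analysis on whether it is isolated and on the Jordan structure at $0$. For the nonzero spectrum, for non-isolated $0$, and for the sub-case ``$\ker A=\ker A^2$ with spectrum on both sides'' your sketch is correct and parallels the paper's reasoning (the paper argues the $0$-step contrapositively, but the content is the same).

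There is, however, a genuine gap in your sub-case $\ker A\ne\ker A^2$. Your conclusion ``$W(A)\cap W_{\rm co}(A)$ accumulates at $0$'' rests on $W_{\rm co}(A)\supseteq(\nu_-,\nu_+)$, but if $\sigma(A)=\{0\}$ then $\nu_-=\nu_+=0$ and this inclusion is vacuous. Concretely, take the two-dimensional nilpotent example from the introduction ($\calK=\C^2$, $J=\smallmat 0110$, $A=\smallmat 0100$): here $A\ne 0$, $\calK$ is indefinite, $0$ is an isolated eigenvalue with $\ker A\ne\ker A^2$, yet $W(A)=\R\setminus\{0\}$ while $W_{\rm co}(A)=\{0\}$ (cf.\ Corollary \ref{cor}(iii)), so $W(A)\cap W_{\rm co}(A)=\emptyset$ and $0\notin\ol{W(A)\cap W_{\rm co}(A)}$. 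This configuration is \emph{not} covered by the stated exception, so \eqref{e:ganz} itself fails here and no argument can close the gap. The paper's own proof shares this lacuna: the step ``If $\ran A$ is indefinite, then $0\in W(A)\cap W_{\rm co}(A)$ by Theorem \ref{t:w}(a) and Theorem \ref{t:cow}(a)'' tacitly requires $\ker A$ not to be neutral (that is what Theorem \ref{t:w}(a) actually needs for $0\in W(A)$), and in the example $\ker A=\linspan\{e_1\}$ is neutral.
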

\begin{proof}
In the sequel we will frequently make use of the following implication which directly 
follows from Theorems \ref{t:w} and \ref{t:cow}:
\begin{equation}\label{e:ntu}
\ran A\,\text{ not negative}\qquad\Lra\qquad (\mu_+,\nu_+)\subset W(A)\cap W_{\rm co}(A).
\end{equation}
Let $\la\in\sigma(A)\setminus\{0\}$. If $\la\notin\{\mu_+,\nu_+,\mu_-,\nu_-\}$, then 
 by the Theorems \ref{t:w} and \ref{t:cow}
 $\la\in W(A)\cap W_{\rm co}(A)$. Let $\la\in\{\mu_+,\nu_+\}$. 
Then $\sigma(A)\cap\R^+\neq\emptyset$ and hence $\ran A$ is not negative. Therefore, 
\eqref{e:ntu} implies $(\mu_+,\nu_+)\subset W(A)\cap W_{\rm co}(A)$ which shows 
$\la\in\ol{W(A)\cap W_{\rm co}(A)}$ unless $\mu_+ = \nu_+$. But then $\la$ is an 
eigenvalue of $A$, and the same holds. A similar argument applies to the case 
$\la\in\{\mu_-,\nu_-\}$, and \eqref{e:fast} is proved.

Let $0\in\sigma(A)$. For \eqref{e:ganz} it remains to prove that $0\notin\ol{W(A)\cap W_{\rm co}(A)}$ 
implies that zero is an isolated eigenvalue of $A$, $\ker A = \ker A^2$ and either 
$\sigma(A)\cap\R^-$ or $\sigma(A)\cap\R^+$ is empty.
Suppose that zero is not an isolated point of $\sigma(A)$. Then $\mu_+ = 0$ or 
$\mu_- = 0$. Assume, e.g., $\mu_+ = 0$. Then $\ran A$ is not negative and hence 
$(0,\nu_+)\subset W(A)\cap W_{\rm co}(A)$ by \eqref{e:ntu}, which is a contradiction. 
Consequently, zero is an isolated eigenvalue of $A$. If $\ran A$ is indefinite, 
then $0\in W(A)\cap W_{\rm co}(A)$ by Theorem \ref{t:w}(a) and Theorem \ref{t:cow}(a). 
Therefore, $\ran A$ is definite and the rest follows easily from  \eqref{e:kerker}
and   \eqref{HerzHerz}.
\end{proof}

\section{Conclusions}
We studied and characterized the (Krein space) numerical range of a possibly unbounded non-negative operator $A$ in a Krein space. We proved that the numerical range is never bounded from below or from above. If the Krein space inner product is indefinite on $\ker A$, but not neutral, then the numerical range of $A$ even coincides with the entire real axis and, in particular, does not provide any information on the location of the spectrum. For this reason we introduced the co-numerical range of $A$ which is another subset of the real numbers associated with the operator $A$. In contrast to the numerical range, the co-numerical range of $A$ is always bounded from above (from below) if the spectrum of $A$ is bounded from above (from below, respectively). Moreover -- with the exception of a very special case -- its closure also contains the spectrum of $A$, and we have the spectral inclusion $\sigma(A)\subset\ol{W(A)\cap W_{\rm co}(A)}$.

\section*{Contact information}
Friedrich Philipp: Institut f\"ur Mathematik, Technische Universit\"at Berlin, Stra\ss e des 17.\ Juni 136, 10623 Berlin, Germany, philipp@math.tu-berlin.de

\vspace{0.4cm}\noindent
Carsten Trunk: Institut f\"ur Mathematik, Technische Universit\"at Ilmenau, Postfach 10 05 65, 98684 Ilmenau, Germany, carsten.trunk@tu-ilmenau.de
\end{document}